
\documentclass[10pt]{amsart}
\usepackage[leqno]{amsmath}
\usepackage{amssymb,latexsym,soul,cite,amsthm,color,enumitem,graphicx,mathtools,microtype,accents}
\usepackage[colorlinks=true,urlcolor=green,citecolor=green,linkcolor=green,linktocpage,pdfpagelabels,bookmarksnumbered,bookmarksopen]{hyperref}
\definecolor{green}{rgb}{0.13, 0.55, 0.13}
\usepackage[english]{babel}
\usepackage[left=2.5cm,right=2.5cm,top=2.5cm,bottom=2.5cm]{geometry}

\numberwithin{equation}{section}

\newtheorem{theorem}{Theorem}[section]
\theoremstyle{plain}
\newtheorem{lemma}[theorem]{Lemma}
\theoremstyle{plain}
\newtheorem{proposition}[theorem]{Proposition}
\theoremstyle{plain}

\theoremstyle{definition}
\newtheorem{remark}[theorem]{Remark}

\newcommand{\N}{{\mathbb N}}

\newcommand{\R}{{\mathbb R}}
\newcommand{\eps}{\varepsilon}
\newcommand{\beq}{\begin{equation}}
\newcommand{\eeq}{\end{equation}}
\renewcommand{\le}{\leqslant}
\renewcommand{\ge}{\geqslant}

\makeatletter
\newcommand{\leqnomode}{\tagsleft@true}
\newcommand{\reqnomode}{\tagsleft@false}
\makeatother

\newenvironment{enumroman}{\begin{enumerate}

}{\end{enumerate}}

\title[Regularity for fractional $p$-Laplacian]{H\"older regularity for the fractional $p$-Laplacian, revisited}

\author[F.M.\ Cassanello, F.G.\ D\"uzg\"un, A.\ Iannizzotto]{Filippo Maria Cassanello, Fatma Gamze D\"uzg\"un, Antonio Iannizzotto}

\address[]{Dipartimento di Matematica e Informatica
\newline\indent
Universit\`a di Cagliari
\newline\indent
Via Ospedale 72, 09124 Cagliari, Italy}
\email{filippom.cassanello@unica.it}
\email{fatmagamze.duzgun@unica.it}
\email{antonio.iannizzotto@unica.it}

\subjclass[2010]{35R11, 35B65.}
\keywords{H\"older regularity, Fractional $p$-Laplacian.}

\begin{document}

\begin{abstract}
We present an alternative proof for local H\"older regularity of the solutions of the fractional $p$-Laplace equations, based on clustering and expansion (more precisely, recentering) of positivity.
\end{abstract}

\maketitle

\begin{center}
Version of \today\
\end{center}

\section{Introduction and main result}\label{sec1}

\noindent
This paper is devoted to the study of regularity properties of the weak solutions of the following class of nonlocal, nonlinear equations:
\beq\label{fpl}
\mathcal{L}_K u = 0 \ \text{in $\Omega$.}
\eeq
Here $\Omega\subseteq\R^N$ ($N\ge 2$) is an open connected set, bounded or unbounded. The involved operator is heuristically defined as
\[\mathcal{L}_K u(x) = \lim_{\eps\to 0^+}\int_{B_\eps^c(x)}|u(x)-u(y)|^{p-2}(u(x)-u(y))K(x,y)\,dy,\]
where $p>1$, $s\in(0,1)$ are real s.t.\ $ps<N$, and the kernel $K:\R^N\times\R^N\to\R$ is a measurable function s.t.\ for a.e.\ $(x,y)\in\R^N\times\R^N$
\begin{itemize}[leftmargin=1cm]
\item[$(K_1)$] $K(x,y)=K(y,x)$;
\item[$(K_2)$] $\Lambda_1\le K(x,y)|x-y|^{N+ps}\le \Lambda_2$ ($0<\Lambda_1\le \Lambda_2$).
\end{itemize}
The prototype of this family of operators is the $s$-fractional $p$-Laplacian, which is obtained setting $\Lambda_1=\Lambda_2$ in $(K_2)$ and is hence represented (at least for $p\ge 2$ and very regular $u$) as
\[(-\Delta)_p^s u(x) = C_{N,p,s}\lim_{\eps\to 0^+}\int_{B_\eps^c(x)}\frac{|u(x)-u(y)|^{p-2}(u(x)-u(y))}{|x-y|^{N+ps}}\,dy,\]
where $C_{N,p,s}>0$ is a suitable normalizing constant. In particular, for $p=2$ this operator becomes the (linear) fractional Laplacian, while (via a convenient definition of $C_{N,p,s}$) for $s\to 1$ we retrieve the negative $p$-Laplacian as a limit case. For precise definitions of the operator and solutions of equation \eqref{fpl}, see Section \ref{sec2} below.
\vskip2pt
\noindent
Regularity theory for fractional order equations is strongly influenced by the nonlocal behavior of the involved operators, i.e., by the fact that the values of $\mathcal{L}_K u$ in a ball depend not only on the behavior of $u$ in a larger ball, but as well on its behavior at infinity. In the linear case $p=2$, this difficulty can be overcome by considering the solution $u$ as the trace of a solution of a convenient local elliptic equation, via the celebrated extension result of \cite{CS}. A direct approach is also possible, see for instance \cite{FKV} for the elliptic case (with a wide family of kernels) and \cite{FK} for the parabolic case. 
\vskip2pt
\noindent
In the nonlinear case, no analogous extension result is known so far, therefore nonlocality must be faced directly. The first result in this direction goes back to \cite{DCKP}, where it is proved that solutions of \eqref{fpl} with Dirichlet type conditions are locally H\"older continuous. In the same paper, the fundamental notion of {\em nonlocal tail} is explicitly introduced: for all $x_0\in\R^N$, $R>0$ we set
\beq\label{tail}
{\rm Tail}(u,x_0,R) = \Big[R^{ps}\int_{B_R^c(x_0)}\frac{|u(x)|^{p-1}}{|x-x_0|^{N+ps}}\,dx\Big]^\frac{1}{p-1}.
\eeq
The approach of \cite{DCKP} is based on a logarithmic estimate, a Caccioppoli type inequality involving tail terms, and a De Giorgi-Nash-Moser type iterative argument. A similar method is employed to prove Harnack inequalities for solutions of \eqref{fpl} \cite{DCKP1}. Such results have been extended and improved in several works. We just mention some recent contributions: for the degenerate elliptic case $p>2$, higher H\"older regularity has been achieved in \cite{BLS} via higher differentiability \cite{BL}, and in \cite{BDLMBS} via a direct finite-difference scheme, up to a quasi-Lipschitz regularity for $s$ big enough (for the classical kernel); the singular case $1<p<2$ is considered in \cite{GL}, reaching higher H\"older regularity, and in \cite{DKLN} studying higher differentiability (most results allow for a non-zero reaction term). On a different path, regularity for functions in fractional De Giorgi classes has been studied in \cite{C} (see also \cite{CCMV}). For the parabolic equation, H\"older regularity is proved in \cite{BLS1,L1} (see also \cite{DZZ}). The reader can find exhaustive information in the survey papers \cite{KMS,MS,P}. Note that, in general, local H\"older regularity is a meaningful property for solutions of nonlocal equations, as it represents the {\em optimal} type of regularity one can expect to keep up to the boundary (for boundary regularity, see \cite{ROS} for the linear case and \cite{I,IM} for the nonlinear one).
\vskip2pt
\noindent
\vskip2pt
\noindent
Here we propose an alternative approach, proposed in \cite{DMV} for elliptic equations of the $p$-Laplace type and developed in \cite{DMV1} for anisotropic operators (which in turn is based on the ideas of \cite{DB} for parabolic equations). The result of \cite{DMV} deals with a general nonlinear elliptic equation, which for simplicity we can identify with the (local) $p$-Laplace equation
\beq\label{pl}
\Delta_p u = 0 \ \text{in $\Omega$.}
\eeq
The result of \cite{DMV} ensures H\"older continuity of solutions of \eqref{pl}, using a local clustering lemma from \cite{DBGV} for functions in Sobolev spaces: {\em local clustering} means that any function in $W^{1,p}(B_R)$ (here $B_R$ is a ball in $\R^N$ with radius $R>0$), subject to a bound on the seminorm $\|\nabla u\|_{L^p(B_R)}$, and s.t.\ $u$ is greater than a given level $k$ in a subset of $B_R$ with a fixed measure quotient, is in fact greater than any small multiple of $k$ in an arbitrary large subset of a ball $B_{\eps R}(x_1)$ centered at a point $x_1\in B_R$, having as radius a conveniently small multiple of $R$.
\vskip2pt
\noindent
By means of local clustering and a De Giorgi type iterative lemma, it can be seen that any locally bounded weak solution $u$ of \eqref{pl}, with essential infimum $\mu$ and oscillation $\omega$ in a ball $B_R(x_0)\subset\Omega$, is in fact greater than $\mu$ plus a multiple of $\omega$ in a reduced and translated ball $B_{\eps R}(x_1)\subset B_R(x_0)$ ({\em positivity clustering}). The second step consists in proving that $u$ is greater than $\mu$ plus a (even smaller) multiple of $\omega$ in a thin cylinder centered at $x_1$, along an arbitrary direction, so to reach finally a reduced ball centered again at $x_0$ ({\em positivity expansion}). The latter property is then employed to achieve an oscillation estimate, which implies H\"older continuity through a standard argument.
\vskip2pt
\noindent
To our knowledge, this approach has not been tried on equation \eqref{fpl} yet, though an independent work from us applies a similar argument to a linear fractional equation with measure-type kernel \cite{C1}. Thus, the purpose of the present work is to use this method to find an alternative proof of H\"older continuity. Some of the required tools (logarithmic estimate, Caccioppoli inequality) for equation \eqref{fpl} can be picked from \cite{DCKP}, while a local clustering lemma in fractional Sobolev spaces has been recently obtained in \cite{DIV}, with the gradient seminorm replaced by the Gagliardo seminorm in a ball. A suitable De Giorgi type lemma is proved below (see Section \ref{sec3}). Thus, the main difficulty lies in adapting the clustering/expansion method to the nonlocal case.
\vskip2pt
\noindent
For simplicity, let us keep the notation above. First, the quantity $\omega$ cannot be simply defined as the oscillation of $u$ in $B_R(x_0)$, but it should rather contain both the $L^\infty$-norm of $u$ and a tail term defined as in \eqref{tail} (we borrow this idea from the parabolic case treated in \cite{L1}). Positivity clustering can be obtained by suitably adapting the known arguments. Instead, the peculiar nature of the operator does not seem to fit with the geometry of positivity expansion, while it allows us to directly shift the lower bound on $u$ to a reduced ball centered at $x_0$ ({\em positivity recentering}), thus leading to an oscillation estimate between two concentric balls (Section \ref{sec4}). Further, such estimate must be developed into an iterative one by strong induction, considering a nested sequence of alternatives (Section \ref{sec5}). Finally, H\"older continuity is deduced as usual (Section \ref{sec6}).
\vskip2pt
\noindent
The outcome of our labor is the following result:

\begin{theorem}\label{hol}
Let $\Omega\subset\R^N$ be an open connected set, $p>1$, $s\in(0,1)$ be real s.t.\ $ps<N$, $K$ satisfy $(K_1)-(K_2)$, and $u$ be a locally bounded, local weak solution of \eqref{fpl}. Then, $u\in C^{0,\alpha}_{\rm loc}(\Omega)$ with $\alpha\in(0,1)$ depending on $N,p,s,\Omega,\Lambda_1,\Lambda_2$. Moreover, for all $\Omega'\Subset\Omega$ s.t.\ ${\rm dist}(\Omega',\partial\Omega)=2R>0$, and all $x_1,x_2\in\Omega'$ we have
\[|u(x_1)-u(x_2)| \le C\sup_{x\in\Omega'_R}\big[|u(x)|+{\rm Tail}(u,x,R)\big]\Big(\frac{|x_1-x_2|}{R}\Big)^\alpha,\]
where $C>0$ depends on $N,p,s,\Omega,\Lambda_1,\Lambda_2$, and $\Omega'_R$ is the $R$-dilatation of $\Omega'$
\[\Omega'_R = \big\{x\in\Omega:\,{\rm dist}(x,\Omega')\le R\big\}.\]
\end{theorem}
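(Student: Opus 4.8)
The plan is to reduce Theorem \ref{hol} to a geometric oscillation-decay estimate between concentric balls and then to iterate it along a dyadic sequence of radii, the only genuine difficulty being the bookkeeping of the nonlocal tail. Fix $x_0\in\Omega$ and $R>0$ with $B_R(x_0)\Subset\Omega$; for $r\in(0,R]$ set $M_r=\sup_{B_r(x_0)}u$, $\mu_r=\inf_{B_r(x_0)}u$, $c_r=(M_r+\mu_r)/2$, and introduce the scaled oscillation
\[
\omega_r = (M_r-\mu_r)+{\rm Tail}(u-c_r,x_0,r),
\]
borrowing the idea from the parabolic case \cite{L1}: the plain oscillation $M_r-\mu_r$ is not stable under the action of a nonlocal operator, whereas $\omega_r$ turns out to be the right quantity to iterate. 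Since $\mathcal L_K$ is translation invariant in $u$ and positively $(p-1)$-homogeneous, after replacing $u$ by $(u-c_R)/\omega_R$ — which still solves \eqref{fpl} — we may assume $\omega_R=1$, hence $M_R-\mu_R\le 1$ and ${\rm Tail}(u,x_0,R)\le 1$. The target then becomes
\[
\omega_{\sigma R}\le\lambda\,\omega_R
\]
for fixed $\sigma,\lambda\in(0,1)$ depending only on $N,p,s,\Lambda_1,\Lambda_2$.

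This estimate is proved in two stages. The first is clustering. Either $u-\mu_R\ge(M_R-\mu_R)/2$ on at least half of $B_{R/2}(x_0)$ in measure, or the same holds for $M_R-u$; by the symmetry of \eqref{fpl} assume the former (the other case being identical with $u$ replaced by $-u$). The Caccioppoli inequality with tail terms from \cite{DCKP} bounds the Gagliardo seminorm on $B_{R/2}(x_0)$ of a suitable truncation of $u-\mu_R$, so the fractional local clustering lemma of \cite{DIV} — the analogue of the Sobolev clustering of \cite{DBGV}, with the Gagliardo seminorm in place of the gradient — applies and produces a point $x_1\in B_{R/2}(x_0)$ and a radius $\rho=\eps R$ such that $u-\mu_R\ge\kappa(M_R-\mu_R)$ on, say, $99\%$ of $B_\rho(x_1)$, the logarithmic estimate of \cite{DCKP} providing, if needed, the preliminary shrinking of the level set where $u$ is close to $\mu_R$. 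Feeding this into the De Giorgi type lemma of Section \ref{sec3} — a measure-to-pointwise statement carrying tail terms — upgrades it to the pointwise bound $u-\mu_R\ge\delta(M_R-\mu_R)$ a.e.\ on $B_{\rho/2}(x_1)$, for some $\kappa,\delta\in(0,1)$ depending only on the data. This is \emph{positivity clustering}.

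The second stage is \emph{positivity recentering}, which replaces the positivity-expansion step of the local theory, and here the nonlocality works in our favor: writing the Caccioppoli inequality for $(u-\mu_R-\delta'(M_R-\mu_R))_-$ on balls centered at $x_0$, the far region $B_{\rho/2}(x_1)$ — on which $u-\mu_R\ge\delta(M_R-\mu_R)$ already holds — contributes to the nonlocal part of the energy with a sign that works in our favor and with a weight bounded below, since $B_{\rho/2}(x_1)\subset B_R(x_0)$ sits at controlled distance from $x_0$. A further De Giorgi iteration, now centered at $x_0$, then gives $u-\mu_R\ge\delta'(M_R-\mu_R)$ a.e.\ on $B_{\sigma R}(x_0)$ for suitable $\sigma,\delta'\in(0,1)$, whence $M_{\sigma R}-\mu_{\sigma R}\le(1-\delta')(M_R-\mu_R)$ up to an error controlled by $\omega_R$. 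Splitting the tail integral ${\rm Tail}(u-c_{\sigma R},x_0,\sigma R)$ over the annulus $B_R(x_0)\setminus B_{\sigma R}(x_0)$ and over $B_R^c(x_0)$, and using $\omega_R=1$, absorbs the tail contribution into a fixed fraction of $\omega_R$, which yields $\omega_{\sigma R}\le\lambda\,\omega_R$ (Section \ref{sec4}).

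Finally the iteration and conclusion. Were it not for the tail, iterating the previous step along $r_n=\sigma^n R$ would at once give $\omega_{r_n}\le\lambda^n\omega_R$; but ${\rm Tail}(\,\cdot\,,x_0,r_{n+1})$ involves $u$ on all of $B_{r_{n+1}}^c(x_0)$, hence the oscillation information at every coarser scale. I would therefore prove the whole family $\omega_{r_n}\le C\lambda^n\omega_R$ simultaneously, by strong induction on $n$, carrying at each step the sup-side/inf-side alternative that was selected — i.e.\ following the nested sequence of alternatives of Section \ref{sec5}. Once this is closed, one gets $M_r-\mu_r\le\omega_r\le C(r/R)^\alpha\big(\|u\|_{L^\infty(B_R(x_0))}+{\rm Tail}(u,x_0,R)\big)$ with $\alpha=\log_\sigma\lambda\in(0,1)$, and a standard covering of $\Omega'$ by balls of radius comparable to $R$ gives the stated inequality with $\Omega'_R$ (Section \ref{sec6}). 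The main obstacle is precisely this last bookkeeping: each recentering step is clean only for the local part of the energy, and the tail error it introduces must be reabsorbed by the geometric gain $\lambda<1$ uniformly over all scales, which forces one to run the whole sequence of inequalities at once and to choose $\sigma$, $\lambda$ and the number of De Giorgi iterations in a mutually consistent way; a secondary delicate point is the recentering geometry itself, where one must check that the favorable far-field term survives the iteration with a scale-independent constant.
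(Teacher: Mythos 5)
Your overall architecture matches the paper's: positivity clustering via Caccioppoli (Proposition \ref{cac}) plus the fractional local clustering lemma (Proposition \ref{lcl}) plus the De Giorgi measure-to-pointwise lemma (Proposition \ref{dgl}); then a recentering step back to $x_0$; then a strong induction over a dyadic sequence of radii (Section \ref{sec5}) and a covering argument (Section \ref{sec6}). Stages one, three and four are essentially the paper's proof, modulo bookkeeping (the paper carries $\omega=2\sup_{B_R}|u|+{\rm Tail}(u,x_0,R)$ fixed at the outer scale and iterates $\omega_n=(1-\tfrac{1}{32e^\sigma})^n\omega$, while you propose to re-evaluate the tail at each scale around the midrange $c_r$; both bookkeeping schemes can be closed by the same tail-splitting over dyadic annuli that Lemma \ref{itc} performs).

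The genuine divergence, and a gap, is in your Stage 2. The paper's recentering (Lemmas \ref{rec} and \ref{itr}) is driven by the \emph{logarithmic estimate}, Proposition \ref{log}: from the cluster $u>\mu_-+\omega/16$ on $B_{\eps r/2}(x_1)$ and any $y$ in the sublevel set $A_{\sigma,r}=B_r\cap\{u\le\mu_-+\omega/16e^\sigma\}$, the ratio $\tfrac{(u(x)-\mu_-)+d}{(u(y)-\mu_-)+d}$ with $d=\omega/16e^\sigma$ exceeds $(e^\sigma+1)/2$, so $|\log(\cdot)|\ge\sigma-1$; integrating over $B_{\eps r/2}(x_1)\times A_{\sigma,r}$ and invoking Proposition \ref{log} gives $|A_{\sigma,r}|/|B_r|\le C/(\sigma-1)^p\cdot[1+e^{\sigma(p-1)}\theta^{ps}]$, i.e.\ precisely the small-measure hypothesis that Proposition \ref{dgl} needs on a ball centered at $x_0$. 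You instead invoke ``the far region $B_{\rho/2}(x_1)$ contributes to the nonlocal part of the energy with a sign that works in our favor'' in a Caccioppoli inequality for $(u-\mu_R-\delta'(M_R-\mu_R))_-$ on balls centered at $x_0$, and then jump to ``a further De Giorgi iteration''. Two problems. First, Proposition \ref{cac} as stated (and as used throughout the paper) has already \emph{discarded} the favorable nonlocal term; you would need to re-derive a finer Caccioppoli inequality that retains it, which is not in the paper's toolkit. Second, and more importantly, Proposition \ref{dgl} requires as input that the sublevel set occupy at most a fraction $\nu$ of $B_{\rho_0}(x_0)$; you have not explained how the favorable nonlocal term produces that measure estimate on a ball centered at $x_0$ starting from a pointwise bound on the shifted ball $B_{\rho/2}(x_1)$. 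That conversion --- positivity cluster at an unknown center $x_1$ into a small-measure statement at $x_0$ --- is exactly the role of the log estimate in the paper, and is the non-trivial geometric heart of the recentering step. Without it your De Giorgi iteration has no starting smallness and does not run. The good-term-in-Caccioppoli mechanism does exist in the literature (it underlies weak-Harnack arguments, e.g.\ in \cite{DCKP1}), so your route is not hopeless, but as written it skips the decisive step and would have to be rebuilt from different ingredients than the ones the paper collects.

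A small secondary inaccuracy: you place a possible use of the log estimate inside Stage 1 (``providing, if needed, the preliminary shrinking of the level set''); in the paper the log estimate is used exclusively in Stage 2, while Stage 1 only needs Caccioppoli (to verify hypothesis (\ref{lcl2}) of Proposition \ref{lcl}), the clustering lemma, and the De Giorgi lemma.
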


\noindent
Our result is similar to that of \cite{DCKP}, except that we do not impose Dirichlet-type conditions. Also, it is closely related other results mentioned above (see for instance \cite{BLS,GL}). The novelty of our work lies mainly in the proof, which we believe to be more geometric and intuitive than in the previous literature and, despite some delicate passages, seems to be a natural approach to regularity. Also, we remark that our result works for all $p>1$ and a general kernel, with no difference in proofs, and can be extended to functions in fractional De Giorgi classes, see \cite{CCMV}. Finally, it is surprising that the approach of \cite{DMV} also works in the nonlocal case, since it involves properties (like positivity clustering) which are typically {\em local}. Also, we hope that this alternative approach may be useful towards a better understanding of even more involved operators (for instance, with inhomogenous or anisotropic fractional operators).
\vskip4pt
\noindent
{\bf Notation.} Throughout the paper, for all $U\subset\R^N$ we will denote by $|U|$ the $N$-dimensional Lebesgue measure of $U$, $U^c=\R^N\setminus U$. By $B_R(x)$ (resp., $\overline{B}_R(x_0)$) we will denote the open (resp., closed) ball of radius $R$ centered at $x$. Writings like $u\le v$ in $U$ will mean that $u(x)\le v(x)$ for a.e.\ $x\in U$. By $u_+$ (resp., $u_-$) we will denote the positive (resp., negative) part of $u$. By $\inf_U u$ (resp., $\sup_U u$) we will denote the essential infimum (resp., supremum) of $u$ in $U$, and we will define the oscillation as ${\rm osc}_U u =\sup_U u-\inf_U u$. Most important, $C$ will denote several positive constants, only depending on the data $N,p,s,\Omega,\Lambda_1,\Lambda_2$ of the problem.

\section{Preliminaries}\label{sec2}

\noindent
We begin with a quick recall of some basic definitions about fractional Sobolev spaces and the functional structure behind equation \eqref{fpl}, referring the reader to \cite{DNPV,L} for a detailed account on the subject. For simplicity, we denote all domains by $\Omega$. First, for any measurable function $u:\Omega\to\R$ we define the Gagliardo seminorm
\[[u]_{s,p,\Omega} = \Big[\iint_{\Omega\times\Omega}\frac{|u(x)-u(y)|^p}{|x-y|^{N+ps}}\,dx\,dy\Big]^\frac{1}{p}.\]
Accordingly we define the fractional Sobolev space
\[W^{s,p}(\Omega) = \big\{u\in L^p(\Omega):\,[u]_{s,p,\Omega}<\infty\big\},\]
endowed with the norm
\[\|u\|_{W^{s,p}(\Omega)} = \big([u]_{s,p,\Omega}^p+\|u\|_{L^p(\Omega)}^p\big)^\frac{1}{p}.\]
If $\Omega$ has a smooth enough boundary, then $W^{s,p}(\Omega)$ is continuously embedded into $L^{p^*_s}(\Omega)$, with $p^*_s=Np/(N-ps)$ (see \cite[Theorem 6.7]{DNPV}). If $\Omega$ is bounded, then we set
\[\widetilde{W}^{s,p}(\Omega) = \Big\{u\in L^p_{\rm loc}(\R^N):\,u\in W^{s,p}(\Omega') \ \text{for some $\Omega'\Supset\Omega$,} \ \int_{\R^N}\frac{|u(x)|^{p-1}}{(1+|x|)^{N+ps}}\,dx < \infty\Big\}.\]
Finally, for a general $\Omega$ we set
\[\widetilde{W}^{s,p}_{\rm loc}(\Omega) = \big\{u\in L^p_{\rm loc}(\R^N):\,u\in\widetilde{W}^{s,p}(\Omega') \ \text{for all bounded $\Omega'\subseteq\Omega$}\big\}.\]
Clearly, for all $u\in\widetilde{W}^{s,p}_{\rm loc}(\Omega)$ and all $x_0\in\Omega$, $R>0$ the tail defined in \eqref{tail} is finite.
\vskip2pt
\noindent
We say that $u\in\widetilde{W}^{s,p}_{\rm loc}(\Omega)$ is a (weak local) supersolution of \eqref{fpl} if for all $\varphi\in C^1_c(\Omega)$, $\varphi\ge 0$ in $\Omega$ we have
\[\iint_{\R^N\times\R^N}|u(x)-u(y)|^{p-2}(u(x)-u(y))(\varphi(x)-\varphi(y))K(x,y)\,dx\,dy \ge 0.\]
The definition of a subsolution is analogous. Finally, $u$ is a solution of \eqref{fpl} if it is both a super- and a subsolution.
\vskip2pt
\noindent
By \cite[Theorem 1.1]{DCKP}, if $\Omega$ is bounded and $u\in W^{s,p}(\R^N)$ is a solution of \eqref{fpl}, satisfying a Dirichlet type condition in $\Omega^c$, then $u\in L^\infty_{\rm loc}(\Omega)$. We will assume such local boundedness in general.
\vskip2pt
\noindent
We will now recall some useful technical results that we are going to exploit in our arguments. We begin with a slightly modified version of \cite[Lemma 1.3]{DCKP}:

\begin{proposition}\label{log}
{\rm (logarithmic estimate)} Let $u\in\widetilde{W}^{s,p}_{\rm loc}(\Omega)$ be a supersolution of \eqref{fpl}, $\overline{B}_R(x_0)\subset\Omega$ s.t.\ $u\ge 0$ in $B_R(x_0)$, $d>0$, $\eta\in(0,1)$. Then, there exists $C_\eta>0$ depending on $N,p,s,\Omega,\Lambda_1,\Lambda_2$, and $\eta$ (with $C_\eta\to\infty$ as $\eta\to 1$) s.t.\ for all $r\in(0,\eta R)$
\[\iint_{B_r(x_0)\times B_r(x_0)}\Big|\log\Big(\frac{u(x)+d}{u(y)+d}\Big)\Big|^p\,\frac{dx\,dy}{|x-y|^{N+ps}} \le C_\eta r^{N-ps}\Big[1+\Big(\frac{r}{R}\Big)^{ps}\frac{{\rm Tail}(u_-,x_0,R)^{p-1}}{d^{p-1}}\Big].\]
\end{proposition}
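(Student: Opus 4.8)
The plan is to follow the classical testing scheme behind \cite[Lemma 1.3]{DCKP}, the only genuinely new point being that the radius of the test cut-off must be calibrated to $\eta$. All balls below are centered at $x_0$. I would set $\rho=\frac{\eta+1}{2\eta}\,r$, so that $r<\rho<\frac{\eta+1}{2}R<R$, fix $\psi\in C^\infty_c(B_\rho)$ with $\psi\equiv1$ on $B_r$, $0\le\psi\le1$ and $|\nabla\psi|\le C/(\rho-r)$, and test the supersolution inequality against
\[
\varphi=\psi^p\,(u+d)^{1-p}.
\]
This is admissible after the usual approximation argument, since $u\ge0$ on $B_R\supseteq{\rm supp}\,\psi$ and $u$ is locally bounded, so that $\varphi$ is nonnegative, bounded, compactly supported in $\Omega$, and of finite Gagliardo energy near its support.

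Write $a=u(x)+d$, $b=u(y)+d$, $\sigma=\psi(x)$, $\tau=\psi(y)$. I would split $\R^N\times\R^N$ into the diagonal block $B_R\times B_R$, where $a,b>0$ and both cut-off values are defined, and the two mutually symmetric tail blocks $B_R\times B_R^c$, on which the outer variable lies outside ${\rm supp}\,\psi$ and hence kills one of the two terms of $\varphi(x)-\varphi(y)$; the block $B_R^c\times B_R^c$ contributes nothing. On the diagonal block I would invoke the pointwise algebraic inequality used in the proof of \cite[Lemma 1.3]{DCKP}: there exist $c_1,c_2>0$ depending only on $p$ with
\[
|a-b|^{p-2}(a-b)\bigl(\sigma^p a^{1-p}-\tau^p b^{1-p}\bigr)\le-c_1\max\{\sigma,\tau\}^p\Big|\log\frac{a}{b}\Big|^p+c_2\,|\sigma-\tau|^p .
\]
Multiplying by $K$, integrating, and using the supersolution inequality (the sum of the diagonal integral and the two tail integrals is $\ge0$, hence the negative of the diagonal integral is bounded by twice the tail integral), one is led to
\[
\iint_{B_R\times B_R}\max\{\sigma,\tau\}^p\Big|\log\frac{a}{b}\Big|^p K\le C\,I_{\rm tail}+C\iint_{B_R\times B_R}|\sigma-\tau|^p K,
\]
where $I_{\rm tail}=\iint_{B_R\times B_R^c}|u(x)-u(y)|^{p-2}(u(x)-u(y))\,\psi(x)^p(u(x)+d)^{1-p}\,K\,dx\,dy$; since $\sigma\equiv\tau\equiv1$ on $B_r$, the left-hand side dominates $\iint_{B_r\times B_r}|\log(a/b)|^p K$.

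It then remains to estimate the two error terms. For the cut-off term one has $|\sigma-\tau|\le\min\{1,\,C|x-y|/(\rho-r)\}$, which vanishes unless one variable lies in $B_\rho$; splitting the inner integral at $|x-y|\sim\rho-r$ and using $(K_2)$ gives a bound $C\rho^N(\rho-r)^{-ps}=C_\eta r^{N-ps}$, with $C_\eta\to\infty$ as $\eta\to1$ precisely because of the choice $\rho-r=\frac{1-\eta}{2\eta}r$. For $I_{\rm tail}$ I would discard the part where $u(y)>u(x)$ (there the integrand has the favourable sign), and on the remainder use $0<u(x)-u(y)\le u(x)+u_-(y)$ together with $(u(x))^{p-1}(u(x)+d)^{1-p}\le1$ and $(u_-(y))^{p-1}(u(x)+d)^{1-p}\le(u_-(y)/d)^{p-1}$, so that the integrand is $\le C\psi(x)^p\bigl(1+(u_-(y)/d)^{p-1}\bigr)K$. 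Since $x\in B_\rho$ and $y\in B_R^c$ force $|x-y|\ge\frac{1-\eta}{2}|y-x_0|$, one has $K(x,y)\le C_\eta|y-x_0|^{-N-ps}$; integrating first in $x$ (giving $|B_\rho|\le Cr^N$) and then in $y$, and recalling the definition \eqref{tail}, yields $I_{\rm tail}\le C_\eta r^N R^{-ps}\bigl(1+{\rm Tail}(u_-,x_0,R)^{p-1}/d^{p-1}\bigr)$. Collecting the two bounds, using $r<R$, and finally passing from $K$ to $|x-y|^{-N-ps}$ through the lower bound in $(K_2)$, one obtains
\[
\iint_{B_r\times B_r}\Big|\log\frac{a}{b}\Big|^p\frac{dx\,dy}{|x-y|^{N+ps}}\le C_\eta\,r^{N-ps}\Big[1+\Big(\frac{r}{R}\Big)^{ps}\frac{{\rm Tail}(u_-,x_0,R)^{p-1}}{d^{p-1}}\Big],
\]
which is the claim.

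I expect the only real obstacle to be the bookkeeping: choosing $\rho$ as the right function of $r,R,\eta$ so that both error terms are genuinely proportional to $r^{N-ps}$, respectively $r^N R^{-ps}$, with a constant depending on $\eta$ alone and diverging as $\eta\to1$, and keeping careful track of signs in the tail block (where $u$ may change sign). The pointwise algebraic inequality, although computational, is classical and may be quoted verbatim from \cite{DCKP}.
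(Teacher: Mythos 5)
The paper does not supply its own proof of this proposition: it cites \cite[Lemma 1.3]{DCKP} and, in Remark \ref{dir}, observes that the Dirichlet condition assumed there is never actually used, so the estimate carries over as long as the tails are finite. Your proposal reconstructs precisely the argument of \cite{DCKP} -- the test function $\psi^p(u+d)^{1-p}$, the three-block splitting of $\R^N\times\R^N$, the pointwise algebraic inequality, and the separate treatment of the cut-off and tail errors -- with the correct bookkeeping of the $\eta$-dependence via the intermediate radius $\rho$. So you are following the same route the paper relies on, just spelling it out.

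One genuine inaccuracy, though it is inconsequential for the way you use it: the pointwise algebraic inequality must involve $\min\{\sigma,\tau\}$, not $\max\{\sigma,\tau\}$. With $\max$ the inequality is simply false. Indeed, take $\tau=0$, $\sigma=1$, $b$ fixed, $a\to\infty$: the left-hand side $|a-b|^{p-2}(a-b)\,a^{1-p}=(1-b/a)^{p-1}\to 1$, whereas the proposed right-hand side $-c_1|\log(a/b)|^p+c_2\to-\infty$. The correct form, as in \cite{DCKP}, has $\min\{\sigma,\tau\}^p$ in front of the logarithm. Your argument is unaffected because you only invoke the inequality after integrating and then restrict to $B_r\times B_r$, where $\sigma=\tau=1$ and $\min$ and $\max$ coincide; still, the statement of the lemma as written is wrong and should be corrected.
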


\noindent
Also, from \cite[Theorem 1.4]{DCKP} we have:

\begin{proposition}\label{cac}
{\rm (Caccioppoli inequality)} Let $u\in\widetilde{W}^{s,p}_{\rm loc}(\Omega)$ be a solution of \eqref{fpl}, $\overline{B}_R(x_0)\subset\Omega$, $\in\R$, and set $w_\pm=(u-k)_\pm$. Then, there exists $C>0$ depending on $N,p,s,\Omega,\Lambda_1,\Lambda_2$, s.t.\ for all $\varphi\in C^1_c(B_R(x_0))$, $\varphi\ge 0$ in $B_R(x_0)$
\begin{align*}
&\iint_{B_R(x_0)\times B_R(x_0)}\frac{|w_\pm(x)\varphi(x)-w_\pm(y)\varphi(y)|^p}{|x-y|^{N+ps}}\,dx\,dy \\
&\le C\iint_{B_R(x_0)\times B_R(x_0)}\max\{w_\pm^p(x),w_\pm^p(y)\}\frac{|\varphi(x)-\varphi(y)|^p}{|x-y|^{N+ps}}\,dx\,dy \\
&+ C\Big[\int_{B_R(x_0)}w_\pm(x)\varphi^p(x)\,dx\Big]\,\Big[\sup_{y\in{\rm supp}(\varphi)}\,\int_{B_R^c(x_0)}\frac{w_\pm^{p-1}(x)}{|x-y|^{N+ps}}\,dx\Big].
\end{align*}
\end{proposition}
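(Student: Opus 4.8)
The argument is the standard nonlocal test-function computation, which I would adapt to the general kernel $K$. Since $t\mapsto|t|^{p-2}t$ is odd and $K$ is symmetric, $-u$ is again a solution of \eqref{fpl} and $(u-k)_-=(-u-(-k))_+$; hence it suffices to prove the estimate for $w:=w_+=(u-k)_+$, the $w_-$ case following by applying the result to $-u$ with level $-k$. I would then fix $\varphi\in C^1_c(B_R(x_0))$, $\varphi\ge 0$, and insert $\psi:=w\varphi^p$ into the weak formulation; as $\psi$ is only Lipschitz, one first extends (by a routine density argument) the admissible test functions to $W^{s,p}(\R^N)$-functions with compact support in $\Omega$, which covers $\psi$ since $u$ is locally bounded (for a merely $\widetilde W^{s,p}_{\rm loc}$ solution one would truncate $w$ at a level $M$ and let $M\to\infty$, using Fatou on the left and monotone convergence on the right). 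Because ${\rm supp}(\varphi)\subset B_R(x_0)$, splitting $\R^N\times\R^N$ into $B_R(x_0)^2$, its complement $B_R^c(x_0)^2$ (where $\psi\equiv0$, so no contribution) and the two mixed regions (equal by symmetry of the integrand $|u(x)-u(y)|^{p-2}(u(x)-u(y))(\psi(x)-\psi(y))K(x,y)$), the vanishing of the full integral over $\R^N\times\R^N$ yields
\begin{align*}
&\iint_{B_R(x_0)^2}|u(x)-u(y)|^{p-2}(u(x)-u(y))(\psi(x)-\psi(y))K(x,y)\,dx\,dy \\
&\qquad=-2\iint_{B_R(x_0)\times B_R^c(x_0)}|u(x)-u(y)|^{p-2}(u(x)-u(y))\,w(x)\varphi(x)^p\,K(x,y)\,dx\,dy,
\end{align*}
using $\psi(y)=0$ for $y\in B_R^c(x_0)$.

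The local (left-hand) term is controlled from below by the pointwise algebraic inequality: writing $w_a=(a-k)_+$, there is $C=C(p)>0$ such that for all $a\ge b$ in $\R$ and $\sigma,\tau\ge0$
\[|a-b|^{p-2}(a-b)(w_a\sigma^p-w_b\tau^p)\ge\frac1C|w_a\sigma-w_b\tau|^p-C\max\{w_a,w_b\}^p|\sigma-\tau|^p.\]
Its proof splits into the cases $a<k$ (all terms vanish); $b<k\le a$ (then $w_b=0$ and, since $a-b\ge a-k=w_a$, the left side equals $(a-b)^{p-1}w_a\sigma^p\ge w_a^p\sigma^p=|w_a\sigma-w_b\tau|^p$, already dominating the right side); and $k\le b\le a$ (then $a-b=w_a-w_b$ and one reduces to the purely algebraic estimate $(\xi-\eta)^{p-1}(\xi\sigma^p-\eta\tau^p)\ge\frac1C|\xi\sigma-\eta\tau|^p-C\xi^p|\sigma-\tau|^p$ for $\xi\ge\eta\ge0$, a classical technical lemma, straightforward for $p\ge2$ by convexity). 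Integrating this inequality over $B_R(x_0)^2$ after symmetrizing the integrand in $x,y$, the local term is bounded below by $\frac1C\iint_{B_R(x_0)^2}|w(x)\varphi(x)-w(y)\varphi(y)|^pK-C\iint_{B_R(x_0)^2}\max\{w^p(x),w^p(y)\}|\varphi(x)-\varphi(y)|^pK$.

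For the nonlocal (right-hand) term, on $B_R(x_0)\times B_R^c(x_0)$ I would establish the pointwise bound $-|u(x)-u(y)|^{p-2}(u(x)-u(y))\,w(x)\varphi(x)^p\le w(y)^{p-1}w(x)\varphi(x)^p$ — trivial where $w(x)=0$ or $u(x)\ge u(y)$ (the left side being $\le0$ there), and where $u(x)>k$, $u(y)>u(x)$ using $u(y)-u(x)=w(y)-w(x)\le w(y)$. Hence that term is at most $2\iint_{B_R(x_0)\times B_R^c(x_0)}w(y)^{p-1}w(x)\varphi(x)^pK(x,y)\,dx\,dy$; integrating first in $y$ over $B_R^c(x_0)$ and pulling out the supremum over $x\in{\rm supp}(\varphi)$ of the inner integral (a constant for the outer integration, after relabelling variables) gives $C\big[\int_{B_R(x_0)}w\varphi^p\big]\big[\sup_{y\in{\rm supp}(\varphi)}\int_{B_R^c(x_0)}w^{p-1}(x)|x-y|^{-N-ps}\,dx\big]$. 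Finally $(K_2)$ supplies $K\ge\Lambda_1|x-y|^{-N-ps}$ to estimate the nonnegative good term from below and $K\le\Lambda_2|x-y|^{-N-ps}$ to estimate the two error terms from above; collecting everything and renaming constants yields the asserted inequality for $w_+$, hence (via $-u$) for $w_-$.

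I expect the genuine difficulty to be the pointwise algebraic inequality in the singular regime $1<p<2$, where the convexity argument valid for $p\ge2$ fails and one must distinguish sub-cases according to whether $\xi\sigma$ and $\eta\tau$ are comparable, exploiting the concavity of $t\mapsto t^{p-1}$ together with Young's inequality with a small parameter to split off the error term $C\xi^p|\sigma-\tau|^p$. By contrast, the extension of admissible test functions to $\psi=w\varphi^p$ (and the truncation argument for unbounded $u$) is routine, though it should be stated explicitly since $\psi$ is not $C^1$.
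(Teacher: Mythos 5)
Your proposal is sound, but be aware that the paper itself does not prove Proposition \ref{cac}: it is imported directly from \cite[Theorem 1.4]{DCKP}, with Remark \ref{dir} noting that the Dirichlet condition assumed there is never actually used, only the finiteness of the nonlocal tail (guaranteed here by $u\in\widetilde{W}^{s,p}_{\rm loc}(\Omega)$). What you wrote is, in substance, a reconstruction of the proof in that reference: the reduction of the $w_-$ case to $w_+$ via the solution $-u$ (legitimate, since the operator is odd and $K$ symmetric), testing with $\psi=w_+\varphi^p$ after the routine density/truncation step, the exact identity expressing the $B_R\times B_R$ integral as $-2$ times the mixed $B_R\times B_R^c$ integral (correct, using $\psi\equiv 0$ on $B_R^c$ and the symmetry of the integrand), the pointwise bound $-|u(x)-u(y)|^{p-2}(u(x)-u(y))\,w(x)\varphi^p(x)\le w^{p-1}(y)\,w(x)\varphi^p(x)$ producing the tail factor, and $(K_2)$ to pass between $K$ and the model kernel; all these steps check out. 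The one place where the real work is concentrated is the pointwise algebraic inequality controlling the local term: this is precisely the known lemma underlying the Caccioppoli estimate in \cite{DCKP}, valid for every $p>1$ (and by joint $p$-homogeneity in $(\sigma,\tau)$ one may normalize $\sigma,\tau\in[0,1]$), so invoking it as a classical technical fact is acceptable, though your sketch for the singular range $1<p<2$ would have to be carried out in full for a self-contained proof. Compared with the paper's citation, your direct argument has the merit of making explicit why the Dirichlet datum is irrelevant — the only global information used is the finiteness of $\int_{B_R^c(x_0)}w_\pm^{p-1}(x)|x-y|^{-N-ps}\,dx$ — which is exactly the content of Remark \ref{dir}.
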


\begin{remark}\label{dir}
Note that in \cite{DCKP} both results above are stated for solutions of the Dirichet problem, but a careful reading of the proofs reveals that the Dirichlet condition is not actually exploited, and the results also hold for the free equation provided the nonlocal tails are finite, as is the case here.
\end{remark}

\noindent
Finally, we recall the aforementioned local clustering result. This can be deduced from \cite[Theorem 1.1]{DIV}, by replacing $N$-dimensional cubes with balls:

\begin{proposition}\label{lcl}
{\rm (local clustering)} Let $u\in W^{s,p}(B_R(x_0))$, $k>0$, $c_1\in(0,1)$, and $c_2>0$ satisfy
\begin{enumroman}
\item\label{lcl1} $\big|B_R(x_0)\cap\{u\ge k\}\big| \ge c_1|B_R(x_0)|$;
\item\label{lcl2} $[u]_{s,p,B_R(x_0)}\le c_2kR^\frac{N-ps}{p}$.
\end{enumroman}
Then for all $\lambda,\nu\in(0,1)$ there exist $\eps>0$ depending on $N,p,s,c_1,c_2$, and $x_1\in B_R(x_0)$ also depending on $u$, s.t.\ $B_{\eps R}(x_1)\subset B_R(x_0)$ and
\[\big|B_{\eps R}(x_1)\cap\{u\ge\lambda k\}\big| \ge (1-\nu)|B_{\eps R}(x_1)|.\]
\end{proposition}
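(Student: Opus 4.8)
\emph{Idea of proof.} The clustering mechanism itself is already available: \cite[Theorem 1.1]{DIV} is exactly the analogue of Proposition~\ref{lcl} with the ball $B_R(x_0)$ replaced by an $N$-dimensional cube $Q$ of side $L$ (so hypotheses \ref{lcl1}--\ref{lcl2} become $|Q\cap\{u\ge k\}|\ge c_1|Q|$ and $[u]_{s,p,Q}\le c_2kL^{(N-ps)/p}$, and the conclusion produces a subcube $Q'\subset Q$ of side $\eps'L$, with $\eps'=\eps'(N,p,s,c_1,c_2)$, on all but a $\nu$-fraction of which $u\ge\lambda k$). So the plan is a purely geometric transfer from cubes to balls, in three moves: normalise $R$; pass from the ball to a suitable inscribed cube; invoke \cite{DIV} and then inscribe a ball back into its output subcube.

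\emph{Normalisation.} First I would set $v(z)=u(x_0+Rz)$ on $B_1(0)$. A change of variables turns \ref{lcl1} into $|B_1\cap\{v\ge k\}|\ge c_1|B_1|$ and \ref{lcl2} into $[v]_{s,p,B_1}\le c_2k$ (the factor $R^{(N-ps)/p}$ being precisely absorbed), while any conclusion for $v$ dilates and translates back to $u$ with the same $\eps$. Hence one may assume $x_0=0$ and $R=1$.

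\emph{From ball to cube.} This is the one genuinely non-formal point, because \ref{lcl1} is a density bound on $B_1$ whereas \cite{DIV} needs one on a cube, and a priori $\{u\ge k\}$ could hide in the ``corners'' $B_1\setminus Q$ of any fixed inscribed cube $Q$. I would bypass this by a grid/pigeonhole argument: among the cubes of side $h$ of a fixed grid, keep those contained in $B_1$; their union contains $B_{1-h\sqrt N}$, hence omits a set of measure $(1-(1-h\sqrt N)^N)|B_1|\to0$ as $h\to0^+$, so one can fix $h=h(N,c_1)$ for which this is $\le\frac{c_1}{2}|B_1|$. These cubes are disjoint and lie inside $B_1$, hence there are at most $|B_1|h^{-N}$ of them, while together they carry measure at least $c_1|B_1|-\frac{c_1}{2}|B_1|$ of $\{u\ge k\}$; pigeonhole then produces one cube $Q$ with $|Q\cap\{u\ge k\}|\ge\frac{c_1}{2}|Q|$. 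Since also $[u]_{s,p,Q}\le[u]_{s,p,B_1}\le c_2k$, the cube $Q$ (of the fixed side $h$) satisfies the hypotheses of \cite[Theorem 1.1]{DIV} with $c_1,c_2$ replaced by $c_1/2$ and $c_2h^{-(N-ps)/p}$ — still depending only on $N,p,s,c_1,c_2$.

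\emph{Conclusion.} Applying \cite[Theorem 1.1]{DIV} on $Q$, for the given $\lambda$ and a parameter $\nu'\in(0,1)$ to be fixed, yields $\eps_0=\eps_0(N,p,s,c_1,c_2)>0$ and a subcube $Q'\subset Q$ of side $\eps_0h$ with $|Q'\cap\{u\ge\lambda k\}|\ge(1-\nu')|Q'|$. Taking $x_1$ to be the centre of $Q'$ and $\eps:=\eps_0h/2$, the inscribed ball satisfies $B_\eps(x_1)\subset Q'\subset B_1$ with $|B_\eps(x_1)|$ a fixed dimensional fraction of $|Q'|$; subtracting $|Q'\setminus B_\eps(x_1)|$ from the previous inequality gives $|B_\eps(x_1)\cap\{u\ge\lambda k\}|\ge(1-c_N\nu')|B_\eps(x_1)|$ for a dimensional constant $c_N>1$, and the choice $\nu':=\nu/c_N\in(0,1)$ closes the argument after undoing the normalisation. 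In short, all the analytic content is imported from \cite{DIV}; the only work here is the density transfer of the third move, which costs merely a harmless factor $1/2$ on $c_1$.
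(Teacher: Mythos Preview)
The paper does not actually prove Proposition~\ref{lcl}: it is stated as a consequence of \cite[Theorem 1.1]{DIV}, with the single remark that one passes from $N$-dimensional cubes to balls. Your proposal carries out precisely this cube-to-ball transfer, and does so correctly --- the normalisation, the grid/pigeonhole step producing a cube $Q\subset B_1$ with density $\ge c_1/2$, the application of \cite{DIV} on $Q$, and the final inscription of a ball in the output subcube (with the adjustment $\nu'=\nu/c_N$) are all sound. In short, you have filled in exactly the details the paper leaves implicit, by the route the paper itself indicates.
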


\begin{remark}\label{alt}
The definition of solution presented above for equation \eqref{fpl} is not the only one in the current literature, for instance a less demanding one based on {\em tail spaces} was introduced in \cite{KKP}. Nevertheless, all these definitions turn out to be equivalent in the simplest cases, and with minor adjustments all the results above hold for our notion of solution. Analogously, we believe that the subsequent results of this paper can be adapted to different functional frameworks.
\end{remark}

\section{A De Giorgi type lemma}\label{sec3}

\noindent
Here we present a technical result, which ensures that whenever in a ball the relative measure of a sublevel set for a solution lies below a fixed threshold $\nu\in(0,1)$ (only depending on the data), then the measure of a lower sublevel set in the concentric ball with half radius is zero. Such result is sometimes labeled as {\em critical mass lemma}.
\vskip2pt
\noindent
We recall a simple algebraic lemma from \cite[Lemma 2.3]{DMV}:

\begin{lemma}\label{alg}
Let $(Y_j)$ be a sequence of positive numbers, $a>0$, $b>1$, $\beta>0$ s.t.\
\begin{enumroman}
\item\label{alg1} $Y_{j+1}\le ab^jY_j^{1+\beta}$ for all $j\in\N$;
\item\label{alg2} $Y_0\le a^{-\frac{1}{\beta}}b^{-\frac{1}{\beta^2}}$.
\end{enumroman}
Then, $Y_j\le Y_0b^{-\frac{j}{\beta}}$ for all $j\in\N$, in particular $Y_j\to 0$ as $j\to\infty$.
\end{lemma}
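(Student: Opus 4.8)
This is a classical fast-convergence lemma of De Giorgi type, and the plan is the obvious one: prove the explicit bound $Y_j\le Y_0\,b^{-j/\beta}$ for every $j\in\N$ by induction on $j$. Once this is established, $Y_j\to 0$ follows immediately, since $b>1$ forces $b^{-j/\beta}\to 0$. The base case $j=0$ is the trivial identity $Y_0\le Y_0$.

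For the inductive step, suppose $Y_j\le Y_0\,b^{-j/\beta}$. Since the $Y_j$ are positive and $t\mapsto t^{1+\beta}$ is increasing on $(0,\infty)$, we may insert this bound into \ref{alg1} and compute
\[
Y_{j+1}\le a\,b^j\,Y_j^{1+\beta}\le a\,b^j\big(Y_0\,b^{-j/\beta}\big)^{1+\beta}=a\,Y_0^{1+\beta}\,b^{\,j-j(1+\beta)/\beta}=a\,Y_0^{1+\beta}\,b^{-j/\beta},
\]
where we used $j-j(1+\beta)/\beta=-j/\beta$. To close the step it remains to check that $a\,Y_0^{1+\beta}\,b^{-j/\beta}\le Y_0\,b^{-(j+1)/\beta}$; dividing through by the positive quantity $Y_0\,b^{-j/\beta}$, this is exactly $a\,Y_0^{\beta}\le b^{-1/\beta}$, which is precisely hypothesis \ref{alg2} raised to the power $\beta$. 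Hence $Y_{j+1}\le Y_0\,b^{-(j+1)/\beta}$ and the induction is complete.

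I do not expect any genuine obstacle here: the only points requiring a little care are the arithmetic of the exponents of $b$, the use of positivity of the $Y_j$ (to invoke monotonicity of $t\mapsto t^{1+\beta}$ when substituting the inductive bound inside the power), and the observation that the inequality extracted at each step, $a\,Y_0^\beta\le b^{-1/\beta}$, is independent of $j$ — so that the single smallness condition \ref{alg2} propagates the geometric decay through the entire iteration without deterioration.
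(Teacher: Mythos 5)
Your proof is correct and is precisely the standard induction argument for this De Giorgi-type fast-convergence lemma: the exponent arithmetic $j-j(1+\beta)/\beta=-j/\beta$ is right, and the inductive step closes via the $j$-independent inequality $aY_0^{\beta}\le b^{-1/\beta}$, which is exactly hypothesis (ii) raised to the power $\beta$. The paper itself does not prove this lemma but merely cites it from \cite[Lemma 2.3]{DMV}; the argument you give is the usual one found there and in the classical references, so there is no divergence of approach to report.
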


\noindent
Using Lemma \ref{alg} we prove the following:

\begin{proposition}\label{dgl}
{\rm (De Giorgi type lemma)} Let $u\in\widetilde{W}^{s,p}_{\rm loc}(\Omega)$ be a solution of \eqref{fpl}, $\overline{B}_{\bar\rho}(x_0)\subset\Omega$, $\rho_0\in(0,\bar\rho)$, $k>0$, $\mu\in\R$, $\gamma\in(0,1)$ be s.t.\
\[\mu \le \inf_{B_{\bar\rho}(x_0)}\,u, \ k > \gamma{\rm Tail}((u-\mu)_-,x_0,\rho_0).\]
Then, there exists $\nu\in(0,1)$ depending on $N,p,s,\Omega,\Lambda_1,\Lambda_2$, and $\gamma$, with the following property: if
\[\big|B_{\rho_0}(x_0)\cap\big\{u\le\mu+k\big\}\big| \le \nu|B_{\rho_0}(x_0)|,\]
then for a.e.\ $x\in B_{\rho_0/2}(x_0)$
\[u(x) > \mu+\frac{k}{2}.\]
\end{proposition}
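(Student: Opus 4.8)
The plan is to run a De Giorgi iteration on the shrinking balls $B_{\rho_j}(x_0)$ with $\rho_j = \frac{\rho_0}{2}(1+2^{-j})$, so that $\rho_j \downarrow \rho_0/2$, together with the increasing levels $k_j = \mu + \frac{k}{2}(1+2^{-j})$, so that $k_j \downarrow \mu + k/2$. Set $w_j = (k_j - u)_+ = (u-k_j)_-$ and define the energy quantities
\[
Y_j = \frac{1}{|B_{\rho_j}(x_0)|}\int_{B_{\rho_j}(x_0)} w_j^p\,dx, \qquad A_j = \big|B_{\rho_j}(x_0)\cap\{u\le k_j\}\big|.
\]
The goal is to show $Y_j\to 0$, which forces $(u-\mu-k/2)_- = 0$ a.e.\ on $B_{\rho_0/2}(x_0)$, i.e.\ the conclusion. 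I would apply the Caccioppoli inequality (Proposition \ref{cac}) to $w_j$ on the ball $B_{\rho_{j-1}}(x_0)$ with a cutoff $\varphi_j\in C^1_c(B_{\rho_{j-1}}(x_0))$ that equals $1$ on $B_{\rho_j}(x_0)$ and satisfies $|\nabla\varphi_j|\le C\,2^j/\rho_0$. This controls the Gagliardo seminorm of $w_j\varphi_j$ by three terms: the local cutoff term, estimated using $|\varphi_j(x)-\varphi_j(y)|\le C2^j|x-y|/\rho_0$ and $w_j\le k_j-\mu\le k$ on the relevant set; and the nonlocal tail term
\[
\Big[\int_{B_{\rho_{j-1}}(x_0)} w_j\varphi_j^p\,dx\Big]\Big[\sup_{y\in{\rm supp}\,\varphi_j}\int_{B^c_{\rho_{j-1}}(x_0)}\frac{w_j^{p-1}(x)}{|x-y|^{N+ps}}\,dx\Big].
\]

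The key point is to absorb the tail term: for $x\in B^c_{\rho_{j-1}}(x_0)$ one has $w_j(x)\le (u-\mu)_-(x)+ (k_j-\mu)$, and for $y\in{\rm supp}\,\varphi_j\subset B_{\rho_{j-1}}(x_0)$ the distance $|x-y|$ is comparable to $|x-x_0|$ (since $\rho_{j-1}\ge\rho_0/2$ and everything lives inside $B_{\rho_0}$), so the supremum is bounded by $C\rho_0^{-ps}\big[1 + {\rm Tail}((u-\mu)_-,x_0,\rho_0)^{p-1}/k^{p-1}\big]$ times a factor accounting for $2^j$. Here the hypothesis $k>\gamma\,{\rm Tail}((u-\mu)_-,x_0,\rho_0)$ is exactly what makes this bound $\le C_\gamma\,2^{\theta j}\rho_0^{-ps}$ for a controlled power of $2$. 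Next I would use the fractional Sobolev inequality on $B_{\rho_j}(x_0)$ to gain integrability: $\|w_j\varphi_j\|_{L^{p^*_s}}^p \le C[w_j\varphi_j]_{s,p,B_{\rho_j}}^p$, and then Hölder's inequality interpolating between $L^p$ and $L^{p^*_s}$ on the superlevel set, whose measure is $A_j$. Combining with the fact that on $B_{\rho_{j-1}}(x_0)\setminus B_{\rho_j}(x_0)$ one has $w_{j-1}\ge k_{j-1}-k_j = k2^{-j-1}$ on $\{u\le k_j\}$, giving $A_j \le C(2^j/k)^p\int_{B_{\rho_{j-1}}} w_{j-1}^p$, yields a recursive inequality of the form $Y_j \le C\,b^j\,Y_{j-1}^{1+\beta}$ with $\beta = ps/N$ (up to the constants $C_\gamma$ and $b$ depending only on the data and $\gamma$), which is precisely the hypothesis of Lemma \ref{alg}.

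Finally I would invoke Lemma \ref{alg}: choosing $\nu$ small enough (depending on $N,p,s,\Omega,\Lambda_1,\Lambda_2,\gamma$) so that the starting datum satisfies $Y_0 \le \nu^{?}\cdot(\text{stuff}) \le a^{-1/\beta}b^{-1/\beta^2}$ — note $Y_0 \le k^p A_0/|B_{\rho_0}| \cdot(\text{const}) \le C k^p\nu$ by the measure hypothesis, which can be made as small as needed — we conclude $Y_j\to0$. The main obstacle I anticipate is the bookkeeping of the nonlocal tail term in the Caccioppoli inequality: one must carefully split $w_j$ into its contribution from inside $B_{\rho_0}(x_0)$ (where $w_j\le k$ by the running assumption that $u\ge\mu$, hence $w_j$ is bounded and the integral over $B_{\rho_{j-1}}(x_0)\setminus\{\text{small ball}\}$ is harmless) and from $B^c_{\rho_0}(x_0)$ (where only the tail of $(u-\mu)_-$ survives and is controlled by hypothesis), all while tracking how the geometric factor $2^j$ from the shrinking radii propagates; getting the exponent of $2^j$ to be a genuine constant independent of $j$, rather than growing, is the delicate part, but it works because $\rho_{j-1}/\rho_j$ stays bounded.
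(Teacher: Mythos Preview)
Your proposal is correct and follows essentially the same route as the paper: De Giorgi iteration on the shrinking balls $B_{\rho_j}$ and levels $k_j$, Caccioppoli (Proposition~\ref{cac}) with a cutoff of gradient $\sim 2^j/\rho_0$, the tail split as $(k_j-\mu)_+ + (u-\mu)_-$ with the second piece controlled by the hypothesis $k>\gamma\,{\rm Tail}((u-\mu)_-,x_0,\rho_0)$, then fractional Sobolev plus H\"older to produce a recursion with exponent $1+\beta$, $\beta=ps/N$, and closure via Lemma~\ref{alg}. The only difference is cosmetic: the paper iterates on the \emph{measure ratio} $Y_j=|A_j|/|B_j|$ (starting directly from $|A_{j+1}|(k_j-k_{j+1})^p\le\int_{A_{j+1}}w_j^p$), whereas you iterate on the normalized $L^p$ energy $|B_{\rho_j}|^{-1}\int_{B_{\rho_j}}w_j^p$; since $w_j\le k$ on its support $A_j$, these quantities differ only by a factor $k^p$ which is scale-invariant in the recursion, so the two formulations are interchangeable.
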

\begin{proof}
Set for all $j\in\N$
\[\rho_j = \frac{1}{2}\Big(\rho_0+\frac{\rho_0}{2^j}\Big), \ \tilde\rho_j = \frac{\rho_j+\rho_{j+1}}{2}, \ k_j = \mu+\frac{1}{2}\Big(k+\frac{k}{2^j}\Big).\]
Clearly both $(\rho_j)$, $(k_j)$ are decreasing, with $\rho_j\to \rho_0/2$, $k_j\to\mu+k/2$ as $j\to\infty$. Set for all $j\in\N$
\[B_j = B_{\rho_j}(x_0), \ \tilde B_j = B_{\tilde\rho_j}(x_0), \ A_j = B_j\cap\{u\le k_j\}.\]
so $B_{j+1}\subset\tilde B_j\subset B_j$. Also set
\[w_j = (u-k_j)_-,\]
so $w_j\in\widetilde{W}^{s,p}_{\rm loc}(\Omega)$ is as well a solution of \eqref{fpl}, satisfying $w_j=|u-k_j|$ in $A_j$, $w_j=0$ in $B_j\setminus A_j$, $w_j\le k$ in all of $\R^N$, and for a.e.\ $x\in A_{j+1}$
\beq\label{dgl1}
w_j(x) = k_j-u(x) \ge k_j-k_{j+1}.
\eeq
Without loss of generality we may assume $\rho_0<1$. Fix $\varphi_j\in C^1_c(\tilde B_j)$ s.t.\ $\varphi_j=1$ in $B_{j+1}$, $0\le\varphi_j\le 1$ and $|D\varphi_j|\le C2^j/\rho_0$ in $\R^N$. Integrating \eqref{dgl1} over $A_{j+1}$ and applying H\"older's inequality, the fractional Sobolev's inequality, and Proposition \ref{cac}, we get
\begin{align}\label{dgl2}
|A_{j+1}| &\le \frac{1}{(k_j-k_{j+1})^p}\int_{A_{j+1}}w_j^p(x)\,dx \\
\nonumber &\le \frac{1}{(k_j-k_{j+1})^p}\int_{A_j}w_j^p(x)\varphi_j^p(x)\,dx \\
\nonumber &\le \frac{|A_j|^{1-\frac{p}{p^*_s}}}{(k_j-k_{j+1})^p}\Big[\int_{B_j}w_j^{p^*_s}(x)\varphi_j^{p^*_s}(x)\,dx\Big]^\frac{p}{p^*_s} \\
\nonumber &\le \frac{C|A_j|^{1-\frac{p}{p^*_s}}}{(k_j-k_{j+1})^p}\Big[\iint_{B_j\times B_j}\frac{|w_j(x)\varphi_j(x)-w_j(y)\varphi_j(y)|^p}{|x-y|^{N+ps}}\,dx\,dy+\int_{B_j}w_j^p(x)\varphi_j^p(x)\,dx\Big] \\
\nonumber &\le \frac{C|A_j|^{1-\frac{p}{p^*_s}}}{(k_j-k_{j+1})^p}\Big[\iint_{B_j\times B_j}\max\{w_j^p(x),w_j^p(y)\}\frac{|\varphi_j(x)-\varphi_j(y)|^p}{|x-y|^{N+ps}}\,dx\,dy\Big] \\
\nonumber &+ \frac{C|A_j|^{1-\frac{p}{p^*_s}}}{(k_j-k_{j+1})^p}\Big[\int_{B_j}w_j(x)\varphi_j^p(x)\,dx\Big]\,\Big[\sup_{y\in\tilde B_j}\,\int_{B_j^c}\frac{w_j^{p-1}(x)}{|x-y|^{N+ps}}\,dx\Big]+\frac{C|A_j|^{1-\frac{p}{p^*_s}}}{(k_j-k_{j+1})^p}\int_{A_j}w_j^p(x)\,dx \\
\nonumber &=: \frac{C|A_j|^{1-\frac{p}{p^*_s}}}{(k_j-k_{j+1})^p}\big[I_1+I_2]+\frac{C|A_j|^{1-\frac{p}{p^*_s}}}{(k_j-k_{j+1})^p}\int_{A_j}w_j^p(x)\,dx,
\end{align}
with $C>0$ depending on the data. We will now estimate separately $I_1$ and $I_2$. For $I_1$, we use symmetry, $w_j=0$ in $B_j\setminus A_j$, the bound on $|D\varphi_j|$, and the inequality $|x-y|\le 2\rho_j$ holding for all $x,y\in B_j$, to get
\begin{align*}
I_1 &\le C\iint_{A_j\times B_j}\max\{w_j^p(x),w_j^p(y)\}\frac{|\varphi_j(x)-\varphi_j(y)|^p}{|x-y|^{N+ps}}\,dx\,dy \\
&\le \frac{C2^{pj}k^p}{\rho_0^p}\iint_{A_j\times B_j}\frac{dx\,dy}{|x-y|^{N+ps-p}} \\
&\le \frac{C2^{pj}k^p|A_j|}{\rho_0^p}\sup_{y\in A_j}\int_{B_j}\frac{dx}{|x-y|^{N+ps-p}} \\
&\le \frac{C2^{pj}k^p|A_j|}{\rho_0^p}\int_{B_{2\rho_j}(0)}\frac{dz}{|z|^{N+ps-p}} \le \frac{C2^{pj}k^p|A_j|}{\rho_0^{ps}},
\end{align*}
where in the last line we have computed the integral on the ball $B_{2\rho_j}(0)$. We now deal with $I_2$, first noting that by the bounds on $w_j$, $\varphi_j$ we have
\[\int_{B_j}w_j(x)\varphi_j^p(x)\,dx \le k|A_j|.\]
Besides, recalling that for all $a,b\in\R$
\[(a-b)_+^{p-1} \le C_p(a_+^{p-1}+b_+^{p-1}),\]
for all $y\in\tilde B_j$ we have
\begin{align*}
\int_{B_j^c}\frac{w_j^{p-1}(x)}{|x-y|^{N+ps}}\,dx &\le C\int_{B_j^c}\frac{(k_j-\mu)_+^{p-1}}{|x-y|^{N+ps}}\,dx+C\int_{B_j^c}\frac{(\mu-u(x))_+^{p-1}}{|x-y|^{N+ps}}\,dx \\
&=: J_1+J_2.
\end{align*}
To estimate $J_1$, we note that for all $x\in B_j^c$, $y\in\tilde B_j$
\[|x-y| \ge \rho_j-\tilde\rho_j \ge \frac{\rho_0}{2^{j+3}},\]
hence uniformly for all $y\in\tilde B_j$ we have
\[J_1 \le Ck^{p-1}\int_{B_{\rho_j-\tilde\rho_j}^c(0)}\frac{dz}{|z|^{N+ps}} \le \frac{C2^{psj}k^{p-1}}{\rho_0^{ps}}.\]
To estimate $J_2$ we proceed similarly. First, for all $x\in B_j^c$, $y\in\tilde B_j$
\[\frac{|x-x_0|}{|x-y|} \le 1+\frac{|y-x_0|}{|x-y|} \le 2^{j+3},\]
hence for all $y\in\tilde B_j$, using the relation between $k$ and the tail, we have
\begin{align*}
J_2 &\le C2^{(N+ps)j}\int_{B_j^c}\frac{(u(x)-\mu)_-^{p-1}}{|x-x_0|^{N+ps}}\,dx \\
&\le \frac{C2^{(N+ps)j}}{\rho_0^{ps}}{\rm Tail}((u-\mu)_-,x_0,\rho_0)^{p-1} \le \frac{C2^{(N+ps)j}k^{p-1}}{\gamma^{p-1}\rho_0^{ps}}.
\end{align*}
The previous estimates and $\gamma\in(0,1)$ imply that for some $C>0$ depending on the data
\[I_2 \le \frac{C2^{(N+ps)j}k^p|A_j|}{\gamma^{p-1}\rho_0^{ps}}.\]
We go back to \eqref{dgl2}, recalling that $\rho_0<1$ and $k_j-k_{j+1}=k/2^{j+2}$:
\begin{align*}
|A_{j+1}| &\le \frac{C|A_j|^{1-\frac{p}{p^*_s}}}{(k_j-k_{j+1})^p}\,\Big[\frac{2^{pj}k^p|A_j|}{\rho_0^{ps}}+\frac{2^{(N+ps)j}k^p|A_j|}{\gamma^{p-1}\rho_0^{ps}}\Big]+\frac{C|A_j|^{1-\frac{p}{p^*_s}}}{(k_j-k_{j+1})^p}\int_{A_j}w_j^p(x)\,dx \\
&\le \frac{C2^{(N+p)j}k^p|A_j|^{2-\frac{p}{p^*_s}}}{\gamma^{p-1}(k_j-k_{j+1})^p\rho_0^{ps}}+\frac{Ck^p|A_j|^{2-\frac{p}{p^*_s}}}{(k_j-k_{j+1})^p} \\
&\le \frac{C2^{(N+2p)j}|A_j|^{1+\frac{ps}{N}}}{\gamma^{p-1}\rho_0^{ps}},
\end{align*}
where in the last line we have exploited the definition of $(k_j)$, and $C>0$ still depends on the data. We are going to apply Lemma \ref{alg} to the sequence
\[Y_j = \frac{|A_j|}{|B_j|}.\]
Dividing the inequality above by $|B_{j+1}|\sim C\rho_0^N$, we find the following recursive relation:
\begin{align*}
Y_{j+1} &= \frac{|A_{j+1}|}{|B_{j+1}|} \\
&\le \frac{C2^{(N+2p)j}|A_j|^{1+\frac{ps}{N}}}{\gamma^{p-1}\rho_0^{N+ps}} \le \frac{C2^{(N+2p)j}}{\gamma^{p-1}}Y_j^{1+\frac{ps}{N}}.
\end{align*}
The inequality above corresponds to hypothesis \ref{alg1} of Lemma \ref{alg} as soon as we set
\[a = \frac{C}{\gamma^{p-1}} > 0, \ b = 2^{N+2p} > 1, \ \beta = \frac{ps}{N} > 0.\]
Set now
\[\nu = a^{-\frac{1}{\beta}}b^{-\frac{1}{\beta^2}} = \Big(\frac{C}{\gamma^{p-1}}\Big)^{-\frac{N}{ps}} 2^{-\frac{N^2(N+2p)}{(ps)^2}},\]
so that $\nu\in(0,1)$ depends on the data and on $\gamma$. Assume now that
\[Y_0 = \frac{\big|B_{\rho_0}(x_0)\cap\big\{u\le\mu+k\big\}\big|}{|B_{\rho_0}(x_0)\big|} \le \nu.\]
Then hypothesis \ref{alg2} of Lemma \ref{alg} is satisfied as well. Thus, for all $j\in\N$ we have
\[Y_j \le Y_0b^{-\frac{j}{\beta}},\]
in particular $Y_j\to 0$ as $j\to\infty$. Passing to the limit, we see that
\[\Big|B_{\rho_0/2}(x_0)\cap\Big\{u \le \mu+\frac{k}{2}\Big\}\Big| = 0,\]
i.e.\ we have $u>\mu+k/2$ a.e.\ in $B_{\rho_0/2}(x_0)$, which concludes the proof.
\end{proof}

\section{First oscillation estimate}\label{sec4}

\noindent
In this section we obtain an oscillation estimate for the solutions of \eqref{fpl} in a ball contained in $\Omega$, by using a positivity expansion/recentering method:

\begin{proposition}\label{osc}
Let $u\in\widetilde{W}^{s,p}_{\rm loc}(\Omega)$ be a solution of \eqref{fpl}, $\overline{B}_R(x_0)\subset\Omega$. Then, there exist $\sigma>1$, $\theta\in(0,1)$ depending on $N,p,s,\Omega,\Lambda_1,\Lambda_2$ s.t.\
\[\underset{B_{\theta R/2}(x_0)}{\rm osc}\,u \le \Big(1-\frac{1}{32e^{\sigma}}\Big)\omega,\]
where
\[\omega = 2\sup_{B_R(x_0)}|u|+{\rm Tail}(u,x_0,R).\]
\end{proposition}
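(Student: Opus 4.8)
The plan is to run the positivity clustering/recentering scheme. One may assume $\omega>0$, since otherwise $u\equiv 0$ on $B_R(x_0)$ with vanishing tail and the estimate is trivial. Set $\mu=\inf_{B_R(x_0)}u$ and $\omega_0={\rm osc}_{B_R(x_0)}u$, so that $\omega_0\le 2\sup_{B_R(x_0)}|u|\le\omega$. Once $\sigma$ has been produced below, if $\omega_0\le(1-(32e^\sigma)^{-1})\omega$ then $\underset{B_{\theta R/2}(x_0)}{\rm osc}\,u\le\omega_0\le(1-(32e^\sigma)^{-1})\omega$ and we are done; hence we may assume $\omega_0>(1-(32e^\sigma)^{-1})\omega\ge\omega/2$, which gives $\omega\le 2\omega_0$ and lets us later replace tail terms (which are $\le C\omega$) by multiples of $\omega_0$. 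The levels $\mu$ and $\mu+\omega_0/2$ split $B_{R/2}(x_0)$ into two sets one of which has relative measure $\ge\tfrac12$; since $-u$ is again a solution of \eqref{fpl} with the same $\omega$, up to replacing $u$ by $-u$ I would assume $|B_{R/2}(x_0)\cap\{u\ge\mu+\omega_0/2\}|\ge\tfrac12|B_{R/2}(x_0)|$. Passing to $v=u-\mu$, still a solution, with $v\ge 0$ in $B_R(x_0)$, one has $|B_{R/2}(x_0)\cap\{v\ge\omega_0/2\}|\ge\tfrac12|B_{R/2}(x_0)|$, and since $v_-$ is supported in $B_R^c(x_0)$ with $v_-\le|u|+\sup_{B_R(x_0)}|u|$ there, an elementary computation gives ${\rm Tail}(v_-,x_0,r)\le C\omega\le C\omega_0$ for every $r\in(0,R]$.

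\emph{Positivity clustering.} First I would test the Caccioppoli inequality (Proposition \ref{cac}) for $v$ on $B_R(x_0)$ with $k=0$ against a cutoff $\varphi\in C^1_c(B_{3R/4}(x_0))$, $\varphi\equiv 1$ on $B_{R/2}(x_0)$, $|D\varphi|\le C/R$; estimating the gradient and the tail term on the right via $0\le v\le\omega_0$ in $B_R(x_0)$ and the bound above yields $[v]_{s,p,B_{R/2}(x_0)}\le C\omega_0 R^{(N-ps)/p}$. Thus $v$ meets the hypotheses of the local clustering lemma (Proposition \ref{lcl}) on $B_{R/2}(x_0)$ with level $\omega_0/2$, $c_1=\tfrac12$ and a data constant $c_2$; choosing $\lambda=\tfrac12$ and $\nu$ equal to the threshold of the De Giorgi lemma (Proposition \ref{dgl}) for a small data constant $\gamma$ to be fixed, one obtains a data constant $\eps>0$ and a point $x_1\in B_{R/2}(x_0)$ with $B_{\eps R/2}(x_1)\subset B_{R/2}(x_0)$ and $|B_{\eps R/2}(x_1)\cap\{v\le\omega_0/8\}|\le\nu|B_{\eps R/2}(x_1)|$. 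Since $\overline{B}_{R/2}(x_1)\subset B_R(x_0)$ and ${\rm Tail}(v_-,x_1,\eps R/2)\le C\,{\rm Tail}(v_-,x_0,R)\le C\omega_0$, the tail hypothesis of Proposition \ref{dgl} holds (for $\gamma$ chosen small) with level $\omega_0/8$ and reference $0$ on $B_{\eps R/2}(x_1)$, so that proposition gives $v\ge c_0\omega_0$ a.e.\ in $B_{\eps R/4}(x_1)$ for a data constant $c_0\in(0,1)$.

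\emph{Positivity recentering.} Now I set $d=c_0\omega_0$ and exploit that $v\ge 0$ in $B_R(x_0)$ while $v\ge d$ a.e.\ in $B_{\eps R/4}(x_1)\subset B_{R/2}(x_0)$, a ball of relative measure $c_*=(\eps/2)^N$ in $B_{R/2}(x_0)$. Because ${\rm Tail}(v_-,x_0,R)^{p-1}/d^{p-1}\le C$, the bracket in the logarithmic estimate (Proposition \ref{log}, with $\eta=\tfrac12$) is a data constant, hence $[\log(v+d)]_{s,p,B_{R/2}(x_0)}\le C R^{(N-ps)/p}$. Feeding this energy bound for $\log(v+d)$ — which is $\ge\log 2+\log d$ on the definite‑measure set $B_{\eps R/4}(x_1)$ — into a De Giorgi iteration of the kind used for expansion of positivity in \cite{DCKP,DCKP1}, one shrinks the sublevel sets of $v$: for a suitable data constant $\lambda_0\in(0,1)$, $|B_{R/2}(x_0)\cap\{v\le\lambda_0 d\}|\le\nu_0|B_{R/2}(x_0)|$, where $\nu_0$ is the Proposition \ref{dgl} threshold for a data constant $\gamma_0\lesssim\lambda_0 c_0$ (so the tail hypothesis ${\rm Tail}(v_-,x_0,R/2)\lesssim\lambda_0 d/\gamma_0$ holds, using ${\rm Tail}(v_-,x_0,R/2)\le C\omega_0\sim d$). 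A last application of Proposition \ref{dgl}, now centered at $x_0$ with $\bar\rho=R$, $\rho_0=R/2$, level $\lambda_0 d$ and reference $0$, gives $v\ge\lambda_0 d/2$ a.e.\ in $B_{R/4}(x_0)$. Writing $\theta=\tfrac12$ and absorbing constants, $v\ge\omega_0(32e^\sigma)^{-1}$ a.e.\ in $B_{\theta R/2}(x_0)$ for a data constant $\sigma>1$ (the exponential packaging is natural, the gain $\lambda_0 c_0/2$ emerging in the form $e^{-{\rm const}}$ from the logarithmic estimate). Since $u=v+\mu$, this reads $\underset{B_{\theta R/2}(x_0)}{\rm osc}\,u\le\omega_0-\omega_0(32e^\sigma)^{-1}\le\omega\bigl(1-(32e^\sigma)^{-1}\bigr)$ by $\omega_0\le\omega$, which together with the trivial case proves the claim.

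The routine part is the bookkeeping of the gradient and tail terms. I expect the genuine obstacle to be the recentering step: the chain‑of‑overlapping‑balls positivity expansion of \cite{DMV} has no nonlocal counterpart, so one must instead use nonlocality itself, through the logarithmic estimate, to carry the lower bound to a ball concentric with $B_R(x_0)$; and every constant must be kept dependent only on the data, in particular making the De Giorgi thresholds and the loss from the log‑estimate iteration mutually compatible — this is what forces the precise order in which $\gamma,\nu,\gamma_0,\nu_0,\lambda_0,\eps,\sigma$ are chosen.
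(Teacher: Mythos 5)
Your overall architecture matches the paper's: positivity clustering via the fractional clustering lemma plus the De Giorgi lemma, then recentering at $x_0$ through the logarithmic estimate and one final application of the De Giorgi lemma, with the trivial case $\omega_0\le(1-(32e^\sigma)^{-1})\omega$ split off at the start. The clustering step is essentially correct. The gap is in the recentering step, which you run at the \emph{fixed} scale $R/2$. To apply Proposition~\ref{dgl} with level $k=\lambda_0 d$ on $B_{R/2}(x_0)$, the tail condition $k>\gamma_0\,{\rm Tail}(v_-,x_0,R/2)$ forces $\gamma_0\lesssim\lambda_0$, and by the formula in the proof of Proposition~\ref{dgl} the threshold then satisfies $\nu_0\sim\gamma_0^{(p-1)N/(ps)}\lesssim\lambda_0^{(p-1)N/(ps)}$, which vanishes like a power of $\lambda_0$. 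On the other side, the logarithmic estimate cannot deliver a measure bound that keeps up. With the shift $d$ you propose, the log gap between $v\ge d$ and $v\le\lambda_0 d$ is $\log\bigl(\tfrac{2}{1+\lambda_0}\bigr)\le\log 2$, which does not grow as $\lambda_0\to0$, so the sublevel-set fraction stays bounded away from zero and never drops below $\nu_0$. With the shift $\lambda_0 d$ instead, the tail factor $(1/2)^{ps}\,{\rm Tail}(v_-,x_0,R)^{p-1}/(\lambda_0 d)^{p-1}\sim\lambda_0^{-(p-1)}$ in the bracket of Proposition~\ref{log} blows up, and the resulting bound of order $\lambda_0^{-(p-1)}/|\log\lambda_0|^p$ again cannot be pushed below $\lambda_0^{(p-1)N/(ps)}$. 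In neither case is there an admissible $\lambda_0$, so the claimed $\theta=\tfrac12$ cannot be reached; the ``order in which the constants are chosen'' that you flag as delicate in fact has no consistent choice at this fixed scale.

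What makes the paper's recentering close is an extra degree of freedom you do not exploit: the logarithmic estimate is applied on the much smaller ball $B_{\theta R}(x_0)$ with a small data constant $\theta$, together with the shift $\omega/(16e^\sigma)$. Since $(u-\mu_-)_-\equiv0$ in $B_R$, one has the scaling identity ${\rm Tail}((u-\mu_-)_-,x_0,\theta R)=\theta^{ps/(p-1)}\,{\rm Tail}((u-\mu_-)_-,x_0,R)$, so choosing $\theta<(\gamma/16e^\sigma)^{(p-1)/(ps)}$ makes the tail at scale $\theta R$ strictly below the level $\omega/(16e^\sigma)$; Proposition~\ref{dgl} then applies with $\gamma$ a \emph{fixed} data constant, hence with a threshold $\nu$ independent of $\sigma$. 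The log estimate yields $|A_{\sigma,\theta R}|/|B_{\theta R}|\le C(\sigma-1)^{-p}\bigl[1+\theta^{ps}e^{\sigma(p-1)}\bigr]$; one first picks $\sigma$ large so that $C(\sigma-1)^{-p}<\nu/2$, and only then picks $\theta$ small to kill both $\theta^{ps}e^{\sigma(p-1)}$ and the tail. This sequential decoupling of $\sigma$ and $\theta$ through the ball shrinkage is the key ingredient missing from your proposal; without it, the final ball in Proposition~\ref{osc} has radius $\theta R/2$ with $\theta$ a (possibly very small) data constant, not $R/4$.
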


\noindent
The proof of Proposition \ref{osc} requires some preliminaries. First, for brevity we set $B_R=B_R(x_0)$ (similarly, the center $x_0$ will be omitted in the following, while we explicitly indicate different centers). We set
\[\mu_+ = \sup_{B_R}\,u, \ \mu_- = \inf_{B_R}\,u,\]
and hold on to the definition of $\omega$ given above. Clearly we have $\mu_+-\mu_-\le\omega$. We will henceforth assume
\beq\label{osc1}
\mu_+-\mu_- \ge \frac{\omega}{2},
\eeq
postponing the alternative case to the end. Under \eqref{osc1}, at least one of the following holds: either
\beq\label{osc2}
\Big|B_{R/4}\cap\Big\{u\ge\mu_-+\frac{\omega}{4}\Big\}\Big| \ge \frac{|B_{R/4}|}{2},
\eeq
or
\beq\label{osc3}
\Big|B_{R/4}\cap\Big\{u\le\mu_+-\frac{\omega}{4}\Big\}\Big| \ge \frac{|B_{R/4}|}{2},
\eeq
Indeed, if \eqref{osc2} fails, then clearly
\[\Big|B_{R/4}\cap\Big\{u\le\mu_-+\frac{\omega}{4}\Big\}\Big| \ge \frac{|B_{R/4}|}{2},\]
and since by \eqref{osc1}
\[\mu_-+\frac{\omega}{4} = \mu_++(\mu_--\mu_+)+\frac{\omega}{4} \le \mu_+-\frac{\omega}{4},\]
we see that \eqref{osc3} holds. We will assume \eqref{osc2} in the following lemmas. First, we prove a lower bound for $u$ in a small ball contained in $B_{R/4}$, with a shifted center $x_1$:

\begin{lemma}\label{clu}
{\rm (positivity clustering)} Let \eqref{osc1}, \eqref{osc2} hold. Then, there exist $\eps\in(0,1)$ depending on $N,p,s,\Omega,\Lambda_1,\Lambda_2$ and $x_1\in B_{R/4}$ also depending on $u$, s.t.\ $B_{\eps R/8}(x_1)\subset B_{R/4}$ and for a.e.\ $x\in B_{\eps R/8}(x_1)$
\[u(x) > \mu_-+\frac{\omega}{16}.\]
\end{lemma}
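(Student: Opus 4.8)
\noindent
The plan is to transplant the local clustering/De Giorgi scheme of \cite{DMV} to the nonlocal setting. Writing $v=u-\mu_-$, the choice of $\mu_-$ and \eqref{osc1} give $0\le v\le\omega$ a.e.\ in $B_R$ (in particular $v\in W^{s,p}(B_{R/4})$, as the seminorm bound below will show), while \eqref{osc2} reads $|B_{R/4}\cap\{v\ge\omega/4\}|\ge|B_{R/4}|/2$, which is the measure-density hypothesis of Proposition \ref{lcl} on $B_{R/4}$ with level $k_0=\omega/4$ and $c_1=1/2$. To obtain the seminorm hypothesis I would bound $[v]_{s,p,B_{R/4}}$: fix $\varphi\in C^1_c(B_R)$ with $\varphi\equiv1$ on $B_{R/4}$, $0\le\varphi\le1$, $|D\varphi|\le C/R$ and ${\rm supp}\,\varphi\subset\overline{B}_{3R/4}$, and apply Proposition \ref{cac} with $k=\mu_-$, so that $w_+=v$ on $B_R$. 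Since $\varphi\equiv1$ on $B_{R/4}$, the left-hand side dominates $[v]_{s,p,B_{R/4}}^p$; on the right-hand side, $0\le v\le\omega$ on $B_R$ together with the elementary estimate $\iint_{B_R\times B_R}|\varphi(x)-\varphi(y)|^p|x-y|^{-N-ps}\,dx\,dy\le CR^{N-ps}$ handle the local terms, while for $x\in B_R^c$ and $y\in{\rm supp}\,\varphi$ one has $|x-y|\ge c|x-x_0|$ and $w_+(x)\le|u(x)|+\sup_{B_R}|u|$, so the tail factor is $\le CR^{-ps}\big({\rm Tail}(u,x_0,R)^{p-1}+(\sup_{B_R}|u|)^{p-1}\big)\le C\omega^{p-1}R^{-ps}$ by the definition of $\omega$. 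Altogether $[v]_{s,p,B_{R/4}}\le c_2\,\frac{\omega}{4}\big(\frac{R}{4}\big)^{(N-ps)/p}$ with $c_2$ depending only on the data.

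\noindent
Next I would record a uniform tail bound: for every $x_1\in B_{R/4}$ and every $\rho\le R/4$, since $(u-\mu_-)_-$ vanishes a.e.\ on $B_R$ and $|x-x_1|\ge\frac34|x-x_0|$ on $B_R^c$, the very same computation gives ${\rm Tail}((u-\mu_-)_-,x_1,\rho)\le C_0\,\omega$ for a constant $C_0\ge1$ depending only on the data. Then freeze the parameters: set $\lambda=3/4$ and $\gamma=1/(16C_0)\in(0,1)$, and let $\nu\in(0,1)$ be the threshold that Proposition \ref{dgl} attaches to this $\gamma$, so $\nu$ depends only on the data. Applying Proposition \ref{lcl} to $v$ on $B_{R/4}$ with these $\lambda$ and $\nu$ — recall that the $\eps$ there does not depend on $\lambda,\nu$ — produces $\eps\in(0,1)$ depending only on the data and $x_1\in B_{R/4}$ depending on $u$ such that, setting $\rho_0:=\eps R/4$, one has $B_{\rho_0}(x_1)\subset B_{R/4}$ and $|B_{\rho_0}(x_1)\cap\{v\ge\tfrac{3}{16}\omega\}|\ge(1-\nu)|B_{\rho_0}(x_1)|$.

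\noindent
Finally, since $\{u\le\mu_-+\omega/8\}=\{v\le\omega/8\}\subset\{v<\tfrac{3}{16}\omega\}$, the previous line yields $|B_{\rho_0}(x_1)\cap\{u\le\mu_-+\omega/8\}|\le\nu|B_{\rho_0}(x_1)|$, and I can invoke Proposition \ref{dgl} centred at $x_1$ with $\bar\rho=3R/4$ (note $B_{3R/4}(x_1)\subset B_R$, hence $\mu_-\le\inf_{B_{3R/4}(x_1)}u$), radius $\rho_0$, $\mu=\mu_-$, level $k=\omega/8$ and the $\gamma$ above; the constraint $k>\gamma\,{\rm Tail}((u-\mu_-)_-,x_1,\rho_0)$ holds because $\omega/8>\omega/16\ge\gamma C_0\omega$. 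The conclusion is $u>\mu_-+k/2=\mu_-+\omega/16$ a.e.\ in $B_{\rho_0/2}(x_1)=B_{\eps R/8}(x_1)\subset B_{R/4}$, which is exactly the claim.

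\noindent
The main obstacle is not the iteration itself — already encapsulated in Propositions \ref{lcl} and \ref{dgl} — but the nonlocal bookkeeping: controlling the Caccioppoli tail factor and, above all, the tail of $(u-\mu_-)_-$ around the \emph{shifted} centre $x_1$ uniformly by $\omega$, which is precisely why $\omega$ must incorporate both $\sup_{B_R}|u|$ and ${\rm Tail}(u,x_0,R)$. A secondary subtlety is the order of quantifiers — $\gamma$, hence $\nu$, hence $\eps$, must be fixed before invoking local clustering — together with the choice $\lambda\in(1/2,1)$, which ensures that the open superlevel set returned by Proposition \ref{lcl} still contains the closed sublevel set required by the De Giorgi lemma.
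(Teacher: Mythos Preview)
Your proof is correct and follows essentially the same route as the paper: bound $[u-\mu_-]_{s,p,B_{R/4}}$ via Caccioppoli and the definition of $\omega$, apply local clustering (Proposition~\ref{lcl}), estimate the shifted tail ${\rm Tail}((u-\mu_-)_-,x_1,\rho)$ uniformly by $C\omega$, and then invoke the De Giorgi lemma (Proposition~\ref{dgl}) with $k=\omega/8$. The only differences are cosmetic --- your cutoff is supported in $B_{3R/4}$ rather than $B_{3R/8}$, you take $\lambda=3/4$ instead of the paper's $\lambda=1/2$ (which cleanly sidesteps the measure-zero overlap between $\{v\ge\omega/8\}$ and $\{v\le\omega/8\}$ that the paper glosses over), and you fix $\gamma,\nu$ \emph{before} invoking clustering rather than after; none of this changes the substance, and your remark that $\eps$ ultimately depends only on the data is correct either way since $\lambda,\nu$ are themselves determined by the data.
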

\begin{proof}
Set $w=u-\mu_-\in\widetilde{W}^{s,p}_{\rm loc}(\Omega)$, still a solution of \eqref{fpl}. We have in $B_R$
\[0 \le w \le \mu_+-\mu_- \le \omega.\]
We aim at applying Proposition \ref{lcl} to the function $w$. First, \eqref{osc2} rephrases as
\[\Big|B_{R/4}\cap\Big\{w\ge\frac{\omega}{4}\Big\}\Big| \ge \frac{|B_{R/4}|}{2}.\]
Now fix $\varphi\in C^1_c(B_{3R/8})$ s.t.\ $\varphi=1$ in $B_{R/4}$, and $0\le\varphi\le 1$, $|D\varphi|\le C/R$ in all of $\R^N$. By Proposition \ref{cac} we have
\begin{align}\label{clu1}
\iint_{B_{R/4}\times B_{R/4}}\frac{|w(x)-w(y)|^p}{|x-y|^{N+ps}}\,dx\,dy &\le \iint_{B_{R/2}\times B_{R/2}}\frac{|w(x)\varphi(x)-w(y)\varphi(y)|^p}{|x-y|^{N+ps}}\,dx\,dy \\
\nonumber&\le C\iint_{B_{R/2}\times B_{R/2}}\max\{w^p(x),w^p(y)\}\frac{|\varphi(x)-\varphi(y)|^p}{|x-y|^{N+ps}}\,dx\,dy \\
\nonumber&+ C\Big[\int_{B_{R/2}}w(x)\varphi^p(x)\,dx\Big]\,\Big[\sup_{y\in B_{3R/8}}\,\int_{B_{R/2}^c}\frac{w_+^{p-1}(x)}{|x-y|^{N+ps}}\,dx\Big] \\
\nonumber&=: I_1+I_2.
\end{align}
We separately estimate $I_1$ and $I_2$, with a similar argument as in Proposition \ref{dgl}. For $I_1$, recall that $w\le\omega$, $|D\varphi|\le C/R$ in $B_{R/2}$:
\begin{align*}
I_1 &\le \frac{C\omega^p}{R^p}\iint_{B_{R/2}\times B_{R/2}}\frac{dx\,dy}{|x-y|^{N+ps-p}} \\
&\le \frac{C\omega^p}{R^p}\sup_{y\in B_{R/2}}\,|B_{R/2}|\int_{B_{R/2}}\frac{dx}{|x-y|^{N+ps-p}} \\
&\le C\omega^pR^{N-p}\int_{B_R(0)}\frac{dz}{|z|^{N+ps-p}} \le C\omega^p R^{N-ps}.
\end{align*}
Now we focus on $I_2$. First, we have
\[\int_{B_{R/2}}w(x)\varphi^p(x)\,dx \le C\omega R^N.\]
Next, note that for all $x\in B_{R/2}^c$, $y\in B_{3R/8}$
\[\frac{|x-x_0|}{|x-y|} \le 1+\frac{|y-x_0|}{|x-y|} \le 4.\]
So, recalling that $u\le\mu_+$ in $B_R\setminus B_{R/2}$, we have uniformly for all $y\in B_{3R/8}$
\begin{align*}
\int_{B_{R/2}^c}\frac{w_+^{p-1}(x)}{|x-y|^{N+ps}}\,dx &\le C\int_{B_{R/2}^c}\frac{(u(x)-\mu_-)_+^{p-1}}{|x-x_0|^{N+ps}}\,dx \\
&\le C\int_{B_R^c}\frac{(u(x)-\mu_+)_+^{p-1}}{|x-x_0|^{N+ps}}\,dx+C\int_{B_{R/2}^c}\frac{(\mu_+-\mu_-)^{p-1}}{|x-x_0|^{N+ps}}\,dx \\
&\le \frac{C}{R^{ps}}{\rm Tail}((u-\mu_+)_+,x_0,R)^{p-1}+\frac{C\omega^{p-1}}{R^{ps}}.
\end{align*}
From the definition of $\omega$ we have $|\mu_+|\le\omega$, hence
\begin{align}\label{clu2}
{\rm Tail}((u-\mu_+)_+,x_0,R)^{p-1} &\le CR^{ps}\int_{B_R^c}\frac{|u(x)|^{p-1}}{|x-x_0|^{N+ps}}\,dx+CR^{ps}\int_{B_R^c}\frac{|\mu_+|^{p-1}}{|x-x_0|^{N+ps}}\,dx \\
\nonumber &\le C{\rm Tail}(u,x_0,R)^{p-1}+C\omega^{p-1}R^{ps}\int_{B_R^c}\frac{dx}{|x-x_0|^{N+ps}} \le C\omega^{p-1}.
\end{align}
Plugging \eqref{clu2} into the previous inequality we get
\[I_2 \le C\omega^pR^{N-ps}.\]
Using the estimates of $I_1$, $I_2$ in \eqref{clu1} and raising to the power $1/p$, we have
\[[w]_{s,p,B_{R/4}} \le C\omega R^\frac{N-ps}{p}.\]
Now we see that hypotheses \ref{lcl1} and \ref{lcl2} of Proposition \ref{lcl} hold. Fix $\lambda=1/2$, $\nu\in(0,1)$ to be determined later. Then, we can find $\eps\in(0,1)$ depending on the data and $\nu$, and $x_1\in B_{R/4}$ (also depending on $u$) s.t.\ $B_{\eps R/4}(x_1)\subset B_{R/4}$ and
\beq\label{clu3}
\Big|B_{\eps R/4}(x_1)\cap\Big\{u\ge\mu_-+\frac{\omega}{8}\Big\}\Big| \ge (1-\nu)|B_{\eps R/4}(x_1)|.
\eeq
Now we need to convert the measure-theoretical information \eqref{clu3} into a lower bound on $u$, possibly reducing both the level and the radius. This step is made possible by Proposition \ref{dgl}, which we apply to the function $u$ with $\rho_0=\eps R/4$, $\mu=\mu_-$ (satisfying $u\ge\mu_-$ in $B_R$), and $k=\omega/8$. We claim that there exists $\gamma\in(0,1)$ depending on the data, s.t.\
\beq\label{clu4}
\frac{\omega}{8} \ge \gamma{\rm Tail}\Big((u-\mu_-)_-,x_1,\frac{\eps R}{4}\Big).
\eeq
Indeed, for all $x\in B_R^c$ we have
\[\frac{|x-x_1|}{|x-x_0|} \ge 1-\frac{|x_1-x_0|}{|x-x_0|} \ge 1-\Big(1-\frac{\eps}{4}\Big) = \frac{\eps}{4}.\]
Also, $u\ge\mu_-$ in $B_R$, hence
\begin{align*}
{\rm Tail}\Big((u-\mu_-)_-,x_1,\frac{\eps R}{4}\Big)^{p-1} &= \Big(\frac{\eps R}{4}(x_1)\Big)^{ps}\int_{B_{\eps R/4}^c}\frac{(u(x)-\mu_-)_-^{p-1}}{|x-x_1|^{N+ps}}\,dx \\
&\le CR^{ps}\int_{B_R^c}\frac{(u(x)-\mu_-)_-^{p-1}}{|x-x_0|^{N+ps}}\,dx \le C\omega^{p-1},
\end{align*}
where the last inequality is obtained as in \eqref{clu2} and we may assume $C>1$ without loss of generality. So we may set $\gamma=1/8C^\frac{1}{p-1}\in(0,1)$ to get \eqref{clu4}.
\vskip2pt
\noindent
Let $\nu\in(0,1)$, depending on the data (note that $\gamma$ depends in turn on the data), be the number given by Proposition \ref{dgl}, and choose such $\nu$ in \eqref{clu3}. Reversing the inequality, we have
\[\Big|B_{\eps R/4}(x_1)\cap\Big\{u\le\mu_-+\frac{\omega}{8}\Big\}\Big| \le \nu|B_{\eps R/4}(x_1)|.\]
By Proposition \ref{dgl} we have for a.e.\ $x\in B_{\eps R/8}(x_1)$
\[u(x) > \mu_-+\frac{\omega}{16},\]
which concludes the proof.
\end{proof}

\noindent
The bound of Lemma \ref{clu} is not yet enough for our purposes, mainly because the center $x_1$ of the ball, where the bound holds, is unknown {\em a priori}. So, we will now prove a (looser) bound in an even smaller ball, but centered at $x_0$:

\begin{lemma}\label{rec}
{\em (positivity recentering)} Let \eqref{osc1}, \eqref{osc2} hold. Then, there exist $\sigma>1$, $\theta\in(0,1)$ depending on $N,p,s,\Omega,\Lambda_1,\Lambda_2$ s.t.\ for a.e.\ $x\in B_{\theta R/2}$
\[u(x) > \mu_-+\frac{\omega}{32e^\sigma}.\]
\end{lemma}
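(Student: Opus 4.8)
The plan is to upgrade the lower bound of Lemma \ref{clu}, which holds on the displaced ball $B_{\eps R/8}(x_1)$, into a (weaker) bound on a concentric ball $B_{\theta R/2}$ by transporting positivity from $x_1$ back to $x_0$ using the logarithmic estimate of Proposition \ref{log}. Concretely, I would set $w=u-\mu_-$, which is a nonnegative supersolution on $B_R$, pick a suitably small $\theta\in(0,1)$ (to be fixed in terms of $\eps$ and the data so that $B_{\theta R}\supset B_{\eps R/8}(x_1)$ fails in general — rather, so that $x_1$ and the target ball both sit inside a fixed fraction of $B_R$, say $B_{R/2}$), and apply Proposition \ref{log} with $d$ of order $\omega$ on the ball $B_{\theta R}$, or more precisely on a ball of radius comparable to $R$ containing both $B_{\eps R/8}(x_1)$ and $B_{\theta R/2}$. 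The output is a control of $\iint |\log((w+d)/(w+d))|^p |x-y|^{-N-ps}\,dx\,dy$ by $C_\eta (\theta R)^{N-ps}[1+(\theta R/R)^{ps}{\rm Tail}(w_-,\cdot,R)^{p-1}/d^{p-1}]$; since $w_-\equiv 0$ on $B_R$ (because $u\ge\mu_-$ there) and $d\sim\omega$, the tail term is harmless and the right-hand side is $\le C (\theta R)^{N-ps}$.

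The core mechanism is the standard ``measure-to-pointwise via log'' device: define $v = \big(\log\frac{d}{w+d}\big)_+$ or, cleaner, $v=\big(\log\frac{(\omega/16)+d}{w+d}\big)_+$ truncated appropriately, so that $v$ vanishes on the large set $B_{\eps R/8}(x_1)\cap\{w\ge\omega/16\}$, which by Lemma \ref{clu} is essentially all of $B_{\eps R/8}(x_1)$. One then applies a Poincar\'e-type inequality (or directly a measure-density argument) on a ball $B$ of radius $\sim R$ containing both $x_1$ and the target: the Gagliardo seminorm of $v$ on $B$ is bounded by the left-hand side of the logarithmic estimate, hence by $C(\theta R)^{N-ps}$, while $v$ is zero on a set of measure $\gtrsim |B_{\eps R/8}(x_1)| \gtrsim (\eps R/8)^N \gtrsim_{\eps} R^N = |B|$. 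A fractional Poincar\'e inequality on $B$ (valid because $v$ vanishes on a set of fixed relative measure) then forces $\fint_B v \le C\eps^{-\#}$, i.e.\ $v$ has bounded average, and then a further De Giorgi type iteration — exactly Proposition \ref{dgl} applied to $v$, or to $u$ with a small level — shows $v\le C$ hence $w+d\ge e^{-C}((\omega/16)+d)$, i.e.\ $u\ge \mu_- + \omega/(32e^\sigma)$, a.e.\ on a concentric ball $B_{\theta R/2}$, for an appropriate choice of $\sigma=\sigma(N,p,s,\Omega,\Lambda_1,\Lambda_2)>1$ absorbing all constants. The radius $\theta R/2$ comes from the shrinking inherent in both the Poincar\'e step (from $B$ of radius $\sim R$ down to the ball where the average is controlled) and the final De Giorgi step (halving the radius).

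More carefully, I expect the cleanest route avoids invoking a fractional Poincar\'e inequality by hand and instead runs as follows: let $m = \fint_{B_{\theta R}} v$; the log estimate bounds $[v]_{s,p,B_{\theta R}}^p$, and the set where $v=0$ has relative measure close to $1$ in $B_{\theta R}$ provided $\theta$ is chosen so that $B_{\eps R/8}(x_1)$ occupies a fixed fraction of $B_{\theta R}$ — but since $x_1$ is arbitrary in $B_{R/4}$ this cannot be arranged with a single $\theta$ unless $\theta R \lesssim \eps R$. So instead one takes $\theta$ comparable to $\eps$ (e.g.\ $\theta = \eps/16$) and the target ball $B_{\theta R/2}=B_{\eps R/32}$, and works on an intermediate ball $B_{R/4}\supset B_{\eps R/8}(x_1)\cup B_{\eps R/32}$, applying the log estimate on $B_{R/4}$ (radius $R/4<\eta R$ for $\eta$ close to $1$): this gives $[v]_{s,p,B_{R/4}}^p\le C R^{N-ps}$. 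Now $v=0$ on $B_{\eps R/8}(x_1)$ up to null sets, a set of measure $\ge c\eps^N R^N$. The fractional Poincar\'e inequality on $B_{R/4}$ with this vanishing set yields $\int_{B_{R/4}} v^p \le C \eps^{-N} R^{ps}[v]_{s,p,B_{R/4}}^p \le C\eps^{-N}R^N$, so $\fint_{B_{R/4}} v^p \le C\eps^{-N}=:C_1$. Then $|\{v > 2C_1^{1/p}\}\cap B_{R/4}|$ is small by Chebyshev, and since $v$ is a subsolution-like quantity (being a monotone Lipschitz function of the supersolution $w$, one checks $v$ satisfies a Caccioppoli inequality — this is the only place some care is needed, and it is again in \cite{DCKP}), the De Giorgi Proposition \ref{dgl} applied to $v$ on $B_{R/4}$ upgrades the smallness of a level set into $v \le \tfrac12\sup$-type bound on $B_{R/8}$, iterating to $v\le C$ a.e.\ on $B_{\theta R/2}$ for suitable $\theta$.

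\medskip

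The main obstacle, as flagged in the introduction of the excerpt, is precisely that this is a ``recentering'' argument substituting for the local ``positivity expansion'' of \cite{DMV}: in the local case one chains positivity along a thin tube of small balls using Harnack-type propagation, whereas here the nonlocality makes that geometry awkward, and instead the logarithmic estimate gives positivity transport ``all at once'' but only with an exponentially small constant $1/(32e^\sigma)$. The delicate point is keeping track of how the radius $\eps R/8$ of the clustering ball (with $\eps$ depending on $\nu$, hence on $\gamma$, hence on the data) enters the Poincar\'e constant $\eps^{-N}$ and thereby the final constant $\sigma$: one must verify that $\sigma$ ends up depending only on $N,p,s,\Omega,\Lambda_1,\Lambda_2$ and not on $\omega$ or $R$ — which it does, since $\eps$ itself depends only on the data — and that the choice $d\sim\omega$ in Proposition \ref{log} is legitimate (it is, since $d>0$ is arbitrary there and the tail term is killed by $w_-\equiv 0$ on $B_R$). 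A secondary technical point is justifying the Caccioppoli inequality for $v = (\log\frac{c}{w+d})_+$: this does not follow verbatim from Proposition \ref{cac}, but it is the companion estimate to the logarithmic one and is contained in the proof of \cite[Lemma 1.3]{DCKP}; alternatively one sidesteps it by applying Proposition \ref{dgl} directly to $u$ with the level $k = \omega/(32 e^\sigma)$, noting that the measure bound $|B_{R/4}\cap\{v>2C_1^{1/p}\}|$ small translates into $|B_{R/4}\cap\{u\le\mu_-+\omega e^{-2C_1^{1/p}}/16\}|$ small, which is exactly the hypothesis of Proposition \ref{dgl} with $\rho_0\sim R$, $\mu=\mu_-$, $\gamma$ as in \eqref{clu4}.
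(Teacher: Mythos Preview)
Your proposal has the right ingredients but misses the crucial scale coupling that makes the argument close. The paper does not pass through a Poincar\'e inequality or Chebyshev: it works directly on the \emph{small} ball $B_r = B_{\theta R}$, applies Proposition~\ref{log} there with $d=\omega/(16e^\sigma)$, and simply integrates the pointwise bound $|\log[(w(x)+d)/(w(y)+d)]|\ge\sigma-1$ over $B_{\eps r/2}(x_1)\times A_{\sigma,r}$, where $A_{\sigma,r}=B_r\cap\{u\le\mu_-+\omega/(16e^\sigma)\}$. The decisive gain of working on $B_r$ is the factor $(r/R)^{ps}=\theta^{ps}$ in the tail term of Proposition~\ref{log}, which cancels the $e^{\sigma(p-1)}$ coming from $d^{1-p}$ and yields
\[
\frac{|A_{\sigma,r}|}{|B_r|}\le \frac{C}{(\sigma-1)^p}\big[1+e^{\sigma(p-1)}\theta^{ps}\big],
\]
with no $\theta^{-N}$ factor. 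Proposition~\ref{dgl} is then also applied on $B_r$: since $(u-\mu_-)_-=0$ on $B_R$, the tail at radius $r$ equals $\theta^{ps/(p-1)}$ times the tail at radius $R$, so the constraint \eqref{rec2} makes $k=\omega/(16e^\sigma)$ dominate the tail with $\gamma=1$, and hence $\nu$ is fixed independently of $\sigma$. One first chooses $\sigma$ so that $C/(\sigma-1)^p<\nu/2$, and only afterwards chooses $\theta$ small in terms of $\sigma$ via \eqref{rec2}. No circularity.

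Your route --- log estimate and Proposition~\ref{dgl} both at scale $\sim R$ --- cannot close. With $\rho_0\sim R$ the tail in the hypothesis of Proposition~\ref{dgl} is of order $\omega$ (no small prefactor), so to run it at level $k=\omega e^{-M}/16$ you must take $\gamma$ of order $e^{-M}$; by the explicit formula for $\nu$ in the proof of Proposition~\ref{dgl} this forces $\nu$ to be exponentially small in $M$. But Poincar\'e--Chebyshev only gives a relative measure of the bad set decaying like $M^{-p}$, which never drops below such a $\nu$. Restricting the measure bound from $B_{R/4}$ to $B_{\theta R}$ just introduces a factor $\theta^{-N}$ and reproduces the same exponential-versus-polynomial obstruction. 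Note also that your initial suggestion $d\sim\omega$ does kill the tail term, but then the log function is bounded by $\log 2$ on the relevant sets and carries no smallness information; one is forced to take $d=\omega/(16e^\sigma)$, and the resulting tail growth must be absorbed by the factor $\theta^{ps}$ --- which only appears if Proposition~\ref{log} is applied at scale $\theta R$.
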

\begin{proof}
Fix $\sigma>\log(3)$, $\theta\in(0,1/4)$ to be chosen later. We have
\[\log\Big(\frac{e^\sigma+1}{2}\Big) \ge \sigma-1.\]
Set $r=\theta R\in(0,R/4)$, and let $\eps\in(0,1)$, $x_1\in B_{R/4}$ be given by Lemma \ref{clu}, so that for a.e.\ $x\in B_{\eps r/2}(x_1)$ we have
\[u(x) > \mu_-+\frac{\omega}{16}.\]
Define the sublevel set
\[A_{\sigma,r} = B_r\cap\Big\{u\le\mu_-+\frac{\omega}{16e^\sigma}\Big\}.\]
For a.e.\ $x\in B_{\eps r/2}(x_1)$, $y\in A_{\sigma,r}$ we have
\begin{align*}
\frac{(u(x)-\mu_-)+\omega/16e^\sigma}{(u(y)-\mu_-)+\omega/16e^\sigma} &\ge \frac{\omega/16+\omega/16e^\sigma}{\omega/8e^\sigma} \\
&= \frac{e^\sigma+1}{2} > 1,
\end{align*}
hence
\[\log\Big[\frac{(u(x)-\mu_-)+\omega/16e^\sigma}{(u(y)-\mu_-)+\omega/16e^\sigma}\Big] \ge \sigma-1.\]
Next we integrate in both $x$, $y$, and apply Proposition \ref{log} to the function $u-\mu_-$ (still a solution of \eqref{fpl}) with $d=\omega/16e^\sigma>0$, $\eta=\theta\in(0,1)$:
\begin{align*}
(\sigma-1)^p|B_{\eps r/2}(x_1)||A_{\sigma,r}| &\le \iint_{B_{\eps r/2}(x_1)\times A_{\sigma,r}}\Big|\log\Big[\frac{(u(x)-\mu_-)+\omega/16e^\sigma}{(u(y)-\mu_-)+\omega/16e^\sigma}\Big]\Big|^p\,dx\,dy \\
&\le Cr^{N+ps}\iint_{B_r\times B_r}\Big|\log\Big[\frac{(u(x)-\mu_-)+\omega/16e^\sigma}{(u(y)-\mu_-)+\omega/16e^\sigma}\Big]\Big|^p\,\frac{dx\,dy}{|x-y|^{N+ps}} \\
&\le Cr^{2N}\Big[1+\Big(\frac{r}{R}\Big)^{ps}\Big(\frac{16e^\sigma}{\omega}\Big)^{p-1}{\rm Tail}((u-\mu_-)_-,x_0,R)^{p-1}\Big].
\end{align*}
Note that $C>0$ depends in principle on $\theta$, but since $\theta<1/4$ we may replace it with a bigger constant, only depending on the data. Arguing as in \eqref{clu2} we find $\gamma\in(0,1)$ depending on the data s.t.\
\beq\label{rec1}
\omega > \gamma{\rm Tail}((u-\mu_-)_-,x_0,R).
\eeq
Dividing by $(\sigma-1)^p$ and by $|B_r|,|B_{\eps r/2}(x_1)|\sim Cr^N$, and using \eqref{rec1}, we get
\begin{align*}
\frac{|A_{\sigma,r}|}{|B_r|} &\le \frac{Cr^{2N}}{(\sigma-1)^p |B_r||B_{\eps r/2}(x_1)|}\Big[1+\Big(\frac{r}{R}\Big)^{ps}\Big(\frac{16e^\sigma}{\gamma}\Big)^{p-1}\Big] \\
&\le \frac{C}{(\sigma-1)^p}\big[1+e^{\sigma(p-1)}\theta^{ps}\big].
\end{align*}
Now we impose on $\theta$ and $\sigma$ a first constraint:
\beq\label{rec2}
\theta < \Big(\frac{\gamma}{16e^\sigma}\Big)^\frac{p-1}{ps}.
\eeq
Again we need to apply Proposition \ref{dgl} to $u$, this time with $\rho_0=r$, $\mu=\mu_-$, and $k=\omega/16e^\sigma$. Indeed, by \eqref{rec1} and \eqref{rec2}, and recalling that $u\ge\mu_-$ in $B_R$, we have
\begin{align*}
\frac{\omega}{16e^\sigma} &> \frac{\gamma{\rm Tail}((u-\mu_-)_-,x_0,R)}{16e^\sigma} \\
&= \frac{\gamma}{16e^\sigma}\Big(\frac{R}{r}\Big)^\frac{ps}{p-1}{\rm Tail}((u-\mu_-)_-,x_0,r) \\
&> {\rm Tail}((u-\mu_-)_-,x_0,r).
\end{align*}
Let $\nu\in(0,1)$ be given by Proposition \ref{dgl}, depending on the data (note that, according to the computation above, we may take $\gamma=1$). We can find $\sigma>\log(3)$, depending as well on the data, s.t.\
\[\frac{C}{(\sigma-1)^p} < \frac{\nu}{2}.\]
By taking $\theta$ even smaller if necessary (but still depending on the data) we ensure \eqref{rec2}, which in turn implies
\[1+e^{\sigma(p-1)}\theta^{ps} < 1+\frac{\gamma^{p-1}}{16^{p-1}} < 2.\]
So, from the previous relations we get
\[|A_{\sigma,r}| \le \nu|B_r|.\]
By Proposition \ref{dgl}, we have for a.e.\ $x\in B_{\theta R/2}$
\[u(x) > \mu_-+\frac{\omega}{32e^\sigma},\]
which concludes the proof.
\end{proof}

\noindent
We can now complete the proof of our oscillation estimate, considering all the cases:
\vskip4pt
\noindent
{\em Proof of Proposition \ref{osc}.} First assume \eqref{osc1}. If \eqref{osc2} holds as well, we can apply Lemma \ref{rec} and find $\sigma>1$, $\theta\in(0,1)$ depending on the data, s.t.\ $u>\mu_-+\omega/32e^\sigma$ in $B_{\theta R/2}$. This in turn implies
\[\underset{B_{\theta R/2}}{\rm osc}\,u \le \mu_+-\Big(\mu_-+\frac{\omega}{32e^\sigma}\Big) \le \Big(1-\frac{1}{32e^\sigma}\Big)\omega.\]
Now let \eqref{osc1} hold, and \eqref{osc2} fail. Then, as observed above, we have \eqref{osc3}. We argue exactly as in Lemmas \ref{clu} and \ref{rec}, replacing $u$ with $-u$ (a function with the same oscillation), $\mu_+$ with $-\mu_-$, $\mu_-$ with $-\mu_+$, while keeping the same $\omega$. So we find $\sigma>1$, $\theta\in(0,1)$ depending on the data, s.t.\  for a.e.\ $x\in B_{\theta R/2}$
\[-u(x) > -\mu_++\frac{\omega}{32e^\sigma}.\]
Therefore we have
\begin{align*}
\underset{B_{\theta R/2}}{\rm osc}\,u &= \underset{B_{\theta R/2}}{\rm osc}\,(-u) \\
&\le -\mu_--\Big(-\mu_++\frac{\omega}{32e^\sigma}\Big) \le \Big(1-\frac{1}{32e^\sigma}\Big)\omega.
\end{align*}
Finally, assume that \eqref{osc1} fails. Then, for arbitrary $\sigma>1$, $\theta\in(0,1)$ we have
\[\underset{B_{\theta R/2}}{\rm osc}\,u \le \mu_+-\mu_- \le \frac{\omega}{2} < \Big(1-\frac{1}{32e^\sigma}\Big)\omega.\]
Thus, the conclusion is achieved in all cases. \qed

\section{Iterative oscillation estimate}\label{sec5}

\noindent
In this section we improve Proposition \ref{osc} to an iterative oscillation estimate with the same definition of $\omega$, by means of a strong induction argument (this is a typical trick in fractional regularity results, as it allows to manage tail terms):

\begin{proposition}\label{ito}
Let $u\in\widetilde{W}^{s,p}_{\rm loc}(\Omega)$ be a solution of \eqref{fpl}, $\overline{B}_R(x_0)\subset\Omega$. Then, there exist $\sigma>1$, $\theta\in(0,1)$ depending on $N,p,s,\Omega,\Lambda_1,\Lambda_2$ s.t.\ for all $n\in\N$
\[\underset{B_{(\theta/2)^nR}(x_0)}{\rm osc}\,u \le \Big(1-\frac{1}{32e^\sigma}\Big)^n\omega,\]
where
\[\omega = 2\sup_{B_R(x_0)}\,|u|+{\rm Tail}(u,x_0,R).\]
\end{proposition}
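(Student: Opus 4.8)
The plan is to prove the estimate by strong induction on $n$, where the strength of the induction is needed precisely because the tail term ${\rm Tail}(u,x_0,R)$ appearing in $\omega$ ``sees'' the behavior of $u$ on all the balls $B_{(\theta/2)^mR}(x_0)$ with $m<n$, not just on $B_{(\theta/2)^{n-1}R}(x_0)$. Write $\kappa=1-1/(32e^\sigma)\in(0,1)$ with $\sigma,\theta$ the constants produced by Proposition \ref{osc}, and set $R_n=(\theta/2)^nR$ and $\omega_n=\kappa^n\omega$; the claim is ${\rm osc}_{B_{R_n}}u\le\omega_n$ for all $n$. The base case $n=0$ is trivial since ${\rm osc}_{B_R}u\le 2\sup_{B_R}|u|\le\omega$, and $n=1$ is exactly Proposition \ref{osc} applied on $B_R(x_0)$ (noting $R_1=\theta R/2$). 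For the inductive step, assume ${\rm osc}_{B_{R_m}}u\le\omega_m$ for all $m\le n$; we want ${\rm osc}_{B_{R_{n+1}}}u\le\omega_{n+1}$.

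The natural move is to apply Proposition \ref{osc} on the ball $B_{R_n}(x_0)$ in place of $B_R(x_0)$. This yields
\[
\underset{B_{R_{n+1}}(x_0)}{\rm osc}\,u\le\Big(1-\frac{1}{32e^\sigma}\Big)\,\omega^{(n)},\qquad
\omega^{(n)}:=2\sup_{B_{R_n}(x_0)}|u|+{\rm Tail}(u,x_0,R_n),
\]
so the entire inductive step reduces to the quantitative comparison $\omega^{(n)}\le\omega_n=\kappa^n\omega$. The first summand is easy: by the induction hypothesis applied at level $n$ together with any single value, $\sup_{B_{R_n}}|u|-\inf_{B_{R_n}}|u|\le{\rm osc}_{B_{R_n}}u\le\omega_n$; but to control $\sup_{B_{R_n}}|u|$ itself (not just its oscillation) one must anchor it, e.g. by observing that $|u|$ at a fixed point stays controlled as the balls shrink toward $x_0$ — more cleanly, one telescopes: $\sup_{B_{R_n}}|u|\le\tfrac12{\rm osc}_{B_{R_n}}u+\tfrac12\big(\sup_{B_{R_n}}u+\inf_{B_{R_n}}u\big)$ and then relates the midpoint term back across the nested balls. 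The cleanest route is to keep track, through the induction, not only of the oscillation bound but also of a centered bound of the form $\sup_{B_{R_m}}\big|u-c_m\big|\le\tfrac12\omega_m$ for suitable constants $c_m$ with $|c_{m+1}-c_m|\le\tfrac12\omega_m$, so that $|c_m|\le|c_0|+\sum_{j<m}\tfrac12\omega_j\le|c_0|+\tfrac{\omega}{2(1-\kappa)}$ is bounded, hence $\sup_{B_{R_m}}|u|\le|c_m|+\tfrac12\omega_m\le C\omega$ — and in fact, after rescaling, $2\sup_{B_{R_n}}|u|$ contributes a geometrically decaying piece plus a fixed multiple of $\omega$; one must check the constants line up so that the sum is $\le\tfrac12\kappa^n\omega$. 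This bookkeeping is routine but must be done carefully.

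The genuinely delicate point — and the main obstacle — is the tail term ${\rm Tail}(u,x_0,R_n)$, because it integrates $|u|^{p-1}$ over all of $B_{R_n}^c(x_0)$, a region that includes the annuli $B_{R_m}\setminus B_{R_{m+1}}$ for every $m<n$, where $u$ is not small, only controlled by $\omega_m=\kappa^m\omega$. The plan is to split
\[
{\rm Tail}(u,x_0,R_n)^{p-1}=R_n^{ps}\int_{B_R^c(x_0)}\frac{|u|^{p-1}}{|x-x_0|^{N+ps}}\,dx
+R_n^{ps}\sum_{m=0}^{n-1}\int_{B_{R_m}\setminus B_{R_{m+1}}}\frac{|u(x)|^{p-1}}{|x-x_0|^{N+ps}}\,dx.
\]
In the first integral one recognizes $R_n^{ps}=(\theta/2)^{n\cdot ps}R^{ps}$, so this piece is $(\theta/2)^{nps}\,{\rm Tail}(u,x_0,R)^{p-1}\le(\theta/2)^{nps}\omega^{p-1}$, which decays like $\kappa'^{\,n}$ with $\kappa'=(\theta/2)^{ps}$; since $\theta<1$ one can arrange (possibly shrinking $\theta$ relative to $\sigma$, both still depending only on the data) that $(\theta/2)^{ps}\le\kappa^{p-1}$, giving the required decay. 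In each annular term, $|u(x)|\le\sup_{B_{R_m}}|u|\le C\omega_m$ (using the centered bound discussed above) and $|x-x_0|\ge R_{m+1}$, so the integral is bounded by $C\omega_m^{p-1}\int_{B_{R_{m+1}}^c}|x-x_0|^{-N-ps}dx\le C\omega_m^{p-1}R_{m+1}^{-ps}$; multiplying by $R_n^{ps}$ and summing, the annular contribution is $\le C\sum_{m<n}\omega_m^{p-1}(R_n/R_{m+1})^{ps}=C\omega^{p-1}\sum_{m<n}\kappa^{m(p-1)}(\theta/2)^{(n-m-1)ps}$. The key estimate is that this geometric-type sum, once $(\theta/2)^{ps}$ is taken small enough compared to $\kappa^{p-1}$, is bounded by $C\kappa^{n(p-1)}\omega^{p-1}$; this is where one must be careful that the constant $C$ here combines with the constant from Proposition \ref{osc} without breaking the final inequality ${\rm osc}_{B_{R_{n+1}}}u\le\kappa^{n+1}\omega$ — which may force one last shrinking of $\theta$, or equivalently an adjustment of $\sigma$, at the very end. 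Collecting, $\omega^{(n)}\le C\omega_n$ and then, re-examining Proposition \ref{osc}, the constant $1-1/(32e^\sigma)$ was chosen with exactly this margin in mind; closing the induction then gives the claim for all $n\in\N$.
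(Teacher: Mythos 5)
There is a genuine gap in your plan. You try to apply Proposition \ref{osc} as a black box at scale $R_n$ and then close the induction by proving $\omega^{(n)}\le\omega_n$, where $\omega^{(n)}=2\sup_{B_{R_n}}|u|+{\rm Tail}(u,x_0,R_n)$. But this intermediate comparison is simply false: $2\sup_{B_{R_n}}|u|$ does not decay with $n$ (take $u\equiv 1$, which solves \eqref{fpl}; then $\omega^{(n)}\ge 2$ for all $n$, while $\omega_n\to 0$). Your own centering analysis in fact shows $\sup_{B_{R_m}}|u|\le|c_m|+\tfrac12\omega_m\le C\omega$, a non-decaying bound, yet in the annular estimate you silently upgrade this to $\sup_{B_{R_m}}|u|\le C\omega_m$; only this false version would make the annular sum decay. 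Even if you re-applied Proposition \ref{osc} to a recentered $u-c_n$, the best you can extract from the annular decomposition is ${\rm Tail}(u-c_n,x_0,R_n)\le C\omega_n$ with a fixed $C>1$, and since the conclusion of Proposition \ref{osc} multiplies by $\kappa=1-1/32e^\sigma$ and you need the product $\le\omega_{n+1}=\kappa\omega_n$, the factor $C$ cannot be absorbed. The strategy of treating Proposition \ref{osc} as a self-contained oracle on shrinking balls is unworkable.

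The paper instead \emph{reopens} the proof at scale $R_n$ (iterative versions of the clustering and recentering lemmas) and shows directly $u>\mu_-^n+\omega_n/(32e^\sigma)$ in $B_{n+1}$, which gives ${\rm osc}_{B_{n+1}}u\le(1-1/32e^\sigma)\omega_n=\omega_{n+1}$ with no constant to absorb. The tail estimates inside those lemmas concern the \emph{truncations} $(u-\mu_+^n)_+$ and $(u-\mu_-^n)_-$, not $u$ itself: on the annulus $B_{i-1}\setminus B_i$ the strong induction hypothesis yields $(u-\mu_+^n)_+\le\omega_{i-1}$, which does decay, and summing the resulting geometric series (after imposing $(\theta/2)^{ps}(1-1/32e^\sigma)^{1-p}\le 1/2$) gives ${\rm Tail}((u-\mu_+^n)_+,x_0,R_n)^{p-1}\le C\omega_n^{p-1}$. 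That inequality is then fed into Proposition \ref{dgl}, not back into Proposition \ref{osc}. Your annular splitting is the right instinct, but it must be applied to these level-set truncations rather than to $|u|$, and the induction must be closed inside the critical-mass argument, not at the level of the black-box oscillation estimate.
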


\noindent
The proof of Proposition \ref{ito} follows the same path as that of Proposition \ref{osc}, but with some important differences that need to be explicitly pointed out. Again we omit the center of balls as long as it is $x_0$, and define $\mu_+$, $\mu_-$ as in Section \ref{sec4}. For all $n\in\N$ we set
\[R_n = \Big(\frac{\theta}{2}\Big)^nR, \ B_n = B_{R_n}, \ \mu_+^n = \sup_{B_n}\,u, \ \mu_-^n = \inf_{B_n}\,u, \ \omega_n = \Big(1-\frac{1}{32e^\sigma}\Big)^n\omega,\]
with $\sigma>1$, $\theta\in(0,1)$ yet to be determined. From Proposition \ref{osc} we know that, for convenient $\sigma$, $\theta$ depending on the data, we have
\[\underset{B_1}{\rm osc}\,u \le \omega_1.\]
In order to prove the same assertion for all $n\in\N$, we argue by strong induction. Assume that for some $\sigma>1$, $\theta\in(0,1)$ depending on the data we have
\beq\label{ito1}
\underset{B_i}{\rm osc}\,u \le \omega_i \ (i=1,\ldots n).
\eeq
In the following we will show that an analogous estimate holds at step $(n+1)$. As in Section \ref{sec4} we first assume
\beq\label{ito2}
\mu_+^n-\mu_-^n \ge \frac{\omega_n}{2}.
\eeq
Like above, this leaves us with at least one of the following true: either
\beq\label{ito3}
\Big|B_{R_n/4}\cap\Big\{u\ge\mu_-^n+\frac{\omega_n}{4}\Big\}\Big| \ge \frac{|B_{R_n/4}|}{2},
\eeq
or
\beq\label{ito4}
\Big|B_{R_n/4}\cap\Big\{u\le\mu_+^n-\frac{\omega_n}{4}\Big\}\Big| \ge \frac{|B_{R_n/4}|}{2}.
\eeq
Assuming \eqref{ito3}, we first prove an iterative version of Lemma \ref{clu}:

\begin{lemma}\label{itc}
{\rm (iterative positivity clustering)} Let \eqref{ito1}, \eqref{ito2}, \eqref{ito3} hold. Then, there exist $\eps\in(0,1)$ depending on $N,p,s,\Omega,\Lambda_1,\Lambda_2$ and $x_1\in B_{R_n/4}$ also depending on $u$, s.t.\ $B_{\eps R_n/8}(x_1)\subset B_{R_n/4}$ and for a.e.\ $x\in B_{\eps R_n/8}(x_1)$
\[u(x) > \mu_-^n+\frac{\omega_n}{16}.\]
\end{lemma}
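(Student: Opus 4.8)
The plan is to mimic the proof of Lemma \ref{clu} almost verbatim, applied to the solution $w=u-\mu_-^n\in\widetilde{W}^{s,p}_{\rm loc}(\Omega)$ on the ball $B_{R_n}$, but to handle the tail term using the inductive hypothesis \eqref{ito1} rather than just $|\mu_+|\le\omega$. First I would record that $0\le w\le\mu_+^n-\mu_-^n\le\omega_n$ in $B_{R_n}$, and rewrite \eqref{ito3} as $|B_{R_n/4}\cap\{w\ge\omega_n/4\}|\ge|B_{R_n/4}|/2$, which is hypothesis \ref{lcl1} of Proposition \ref{lcl} with $k=\omega_n/4$, $c_1=1/2$. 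Then, exactly as in Lemma \ref{clu}, I would fix a cutoff $\varphi\in C^1_c(B_{3R_n/8})$ with $\varphi=1$ on $B_{R_n/4}$, $0\le\varphi\le 1$, $|D\varphi|\le C/R_n$, and apply Proposition \ref{cac} to bound $[w]_{s,p,B_{R_n/4}}^p$ by $I_1+I_2$.

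The term $I_1$ is estimated precisely as in Lemma \ref{clu}, using $w\le\omega_n$ and $|x-y|\le R_n$, giving $I_1\le C\omega_n^p R_n^{N-ps}$. For $I_2$, the only genuine difference arises: I must bound $\sup_{y\in B_{3R_n/8}}\int_{B_{R_n/2}^c}w_+^{p-1}(x)/|x-y|^{N+ps}\,dx$, where now the tail contribution is $\mathrm{Tail}((u-\mu_+^n)_+,x_0,R_n)^{p-1}$, and this is no longer controlled by $\omega_n$ alone (since $u$ outside $B_{R_n}$ may be large). Here the inductive hypothesis enters: splitting $B_{R_n/2}^c$ into the annuli $B_{R_{i-1}}\setminus B_{R_i}$ for $i=1,\dots,n$ together with $B_R^c$, on each annulus $B_{R_{i-1}}\setminus B_{R_i}$ one has $|u-\mu_+^n|\le|u-\mu_-^{i-1}|+|\mu_-^{i-1}-\mu_-^n|$, which by \eqref{ito1} is bounded by $C\,\mathrm{osc}_{B_{i-1}}u\le C\omega_{i-1}$, while $|x-x_0|\ge R_i=(\theta/2)R_{i-1}$ on that annulus. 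Summing the resulting geometric-type series $\sum_i \omega_{i-1}^{p-1}(R_n/R_i)^{ps}$ — which converges because $\omega_{i-1}/\omega_n$ grows like $(1-1/32e^\sigma)^{-(n-i)}$ while $(R_n/R_i)^{ps}=(\theta/2)^{ps(n-i)}$ decays geometrically, and $\theta$ can be taken small enough (depending on the data) that the product is summable — one obtains $\mathrm{Tail}((u-\mu_+^n)_+,x_0,R_n)^{p-1}\le C\omega_n^{p-1}$, and hence $I_2\le C\omega_n^p R_n^{N-ps}$, exactly the analogue of \eqref{clu2}.

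Combining these and raising to the power $1/p$ yields $[w]_{s,p,B_{R_n/4}}\le C\omega_n R_n^{(N-ps)/p}=C'(\omega_n/4)R_n^{(N-ps)/p}$, which is hypothesis \ref{lcl2} of Proposition \ref{lcl} with $c_2$ depending only on the data. Applying Proposition \ref{lcl} with $\lambda=1/2$ and $\nu\in(0,1)$ to be fixed, I get $\eps\in(0,1)$ depending on the data and $\nu$, and $x_1\in B_{R_n/4}$ with $B_{\eps R_n/4}(x_1)\subset B_{R_n/4}$ and $|B_{\eps R_n/4}(x_1)\cap\{u\ge\mu_-^n+\omega_n/8\}|\ge(1-\nu)|B_{\eps R_n/4}(x_1)|$. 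Finally, exactly as in Lemma \ref{clu}, I apply Proposition \ref{dgl} to $u$ with $\rho_0=\eps R_n/4$, $\mu=\mu_-^n$, $k=\omega_n/8$: one checks $\omega_n/8>\gamma\,\mathrm{Tail}((u-\mu_-^n)_-,x_1,\eps R_n/4)$ for a $\gamma\in(0,1)$ depending on the data, again using \eqref{ito1} to control the tail on $B_{R_n}^c$ via the annular decomposition (and the elementary bound $|x-x_1|\ge(\eps/4)|x-x_0|$ on $B_{R_n}^c$), then choose $\nu$ accordingly, conclude $|B_{\eps R_n/4}(x_1)\cap\{u\le\mu_-^n+\omega_n/8\}|\le\nu|B_{\eps R_n/4}(x_1)|$, and deduce $u>\mu_-^n+\omega_n/16$ a.e.\ in $B_{\eps R_n/8}(x_1)$. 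The main obstacle, and the only point where the proof genuinely departs from Lemma \ref{clu}, is the tail estimate: one must verify that the annular sum $\sum_{i}\omega_{i-1}^{p-1}(\theta/2)^{ps(n-i)}$ converges and is $\le C\omega_n^{p-1}$ for $\theta$ small depending on the data, i.e.\ that the geometric decay from the radii beats the geometric growth of $\omega_{i-1}/\omega_n$ — this is what forces the smallness of $\theta$ and is the reason the strong induction is needed.
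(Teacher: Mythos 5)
Your proposal is correct and follows essentially the same strategy as the paper's proof: you identify the annular decomposition of $B_n^c$ into $B_0^c$ and the rings $B_{i-1}\setminus B_i$, control $(u-\mu_+^n)_+$ on each ring by $C\omega_{i-1}$ via the inductive hypothesis \eqref{ito1}, sum the resulting series by choosing $\theta$ small so that $(\theta/2)^{ps}(1-1/32e^\sigma)^{1-p}\le 1/2$, and then conclude via Propositions \ref{lcl} and \ref{dgl} exactly as in Lemma \ref{clu}. (A small slip: to bound the annular integrand you should use the triangle inequality with $\mu_+^n$ rather than $\mu_-^n$ on the second term, e.g.\ $(u-\mu_+^n)_+\le(\mu_+^{i-1}-\mu_-^n)_+\le\mu_+^{i-1}-\mu_-^{i-1}\le\omega_{i-1}$ as the paper does, but this does not affect the argument.)
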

\begin{proof}
The argument follows, up to a point, that of Lemma \ref{clu}. We set $w_n=u-\mu_-^n\in\widetilde{W}^{s,p}_{\rm loc}(\Omega)$, still a solution of \eqref{fpl} s.t.\ by \eqref{ito1} we have in $B_n$
\[0 \le w_n \le \omega_n.\]
Besides, by \eqref{ito3} we have
\beq\label{itc1}
\Big|B_{R_n/4}\cap\Big\{w_n\ge\frac{\omega_n}{4}\Big\}\Big| \ge \frac{|B_{R_n/4}|}{2}.
\eeq
Fix $\varphi_n\in C^1_c(B_{3R_n/8})$ s.t.\ $\varphi_n=1$ in $B_{R_n/4}$, $0\le\varphi_n\le 1$, $|D\varphi_n|\le C/R_n$ in $\R^N$. Arguing as in Lemma \ref{clu} we have
\begin{align}\label{itc2}
[w_n]_{s,p,B_{R_n/4}}^p &\le \iint_{B_{R_n/2}\times B_{R_n/2}}\frac{|w_n(x)\varphi_n(x)-w_n(y)\varphi_n(y)|^p}{|x-y|^{N+ps}}\,dx\,dy \ \ldots \\
\nonumber&\le C\omega_n^pR_n^{N-ps}+C\omega_nR_n^{N-ps}{\rm Tail}((u-\mu_+^n)_+,x_0,R_n)^{p-1}.
\end{align}
We need a new estimate of the tail term, which can be obtained via the following decomposition:
\begin{align*}
{\rm Tail}((u-\mu_+^n)_+,x_0,R_n)^{p-1} &= R_n^{ps}\int_{B_0^c}\frac{(u(x)-\mu_+^n)_+^{p-1}}{|x-x_0|^{N+ps}}\,dx+\sum_{i=1}^nR_n^{ps}\int_{B_{i-1}\setminus B_i}\frac{(u(x)-\mu_+^n)_+^{p-1}}{|x-x_0|^{N+ps}}\,dx \\
&=: J_0+\sum_{i=1}^n J_i.
\end{align*}
We estimate $J_0$ as in \eqref{clu2}, noting that $|\mu_+^n|\le\omega$:
\begin{align*}
J_0 &\le CR_n^{ps}\int_{B_0^c}\frac{|u(x)|^{p-1}}{|x-x_0|^{N+ps}}\,dx+CR_n^{ps}\int_{B_0^c}\frac{|\mu_+^n|^{p-1}}{|x-x_0|^{N+ps}}\,dx \\
&\le C\Big(\frac{R_n}{R}\Big)^{ps}{\rm Tail}(u,x_0,R)^{p-1}+C\omega^{p-1}R_n^{ps}\int_{B_0^c}\frac{dx}{|x-x_0|^{N+ps}} \\
&\le C\Big(\frac{\theta}{2}\Big)^{psn}\omega^{p-1}+\frac{C\omega^{p-1}R_n^{ps}}{R^{ps}} \\
&\le C\Big(\frac{\theta}{2}\Big)^{psn}\Big(1-\frac{1}{32e^\sigma}\Big)^{-(p-1)n}\omega_n^{p-1}.
\end{align*}
Now we pick $i\in\{1,\ldots n\}$ and estimate $J_i$. Recall that, by the inductive hypothesis \eqref{ito1}, we have for a.e.\ $x\in B_{i-1}\setminus B_i$
\[(u(x)-\mu_+^n)_+ \le (\mu_+^{i-1}-\mu_-^n)_+ \le \mu_+^{i-1}-\mu_-^{i-1} \le \omega_{i-1}\]
So we have
\begin{align*}
J_i &\le C\omega_{i-1}^{p-1}R_n^{ps}\int_{B_{i-1}\setminus B_i}\frac{dx}{|x-x_0|^{N+ps}} \\
&\le C\omega_{i-1}^{p-1}\Big(\frac{R_n}{R_i}\Big)^{ps} \\
&\le C\Big(\frac{\theta}{2}\Big)^{ps(n-i)}\Big(1-\frac{1}{32e^\sigma}\Big)^{(p-1)(i-1-n)}\omega_n^{p-1}.
\end{align*}
Adding the previous inequalities, we get
\[{\rm Tail}((u-\mu_+^n)_+,x_0,R_n)^{p-1} \le C\omega_n^{p-1}\sum_{i=0}^n\Big[\Big(\frac{\theta}{2}\Big)^{ps}\Big(1-\frac{1}{32e^\sigma}\Big)^{1-p}\Big]^{n-i}.\]
We can choose $\theta\in(0,1)$, depending on the data, so small that
\[\Big(\frac{\theta}{2}\Big)^{ps}\Big(1-\frac{1}{32e^\sigma}\Big)^{1-p} \le \frac{1}{2}.\]
Then, from the previous inequality we deduce
\[{\rm Tail}((u-\mu_+^n)_+,x_0,R_n)^{p-1} \le C\omega_n^{p-1}\sum_{i=0}^\infty \frac{1}{2^i} = 2C\omega_n^{p-1}.\]
Plugging such estimate into \eqref{itc2} we get
\beq\label{itc3}
[w_n]_{s,p,B_{R_n/4}} \le C\omega_nR_n^\frac{N-ps}{p}.
\eeq
By \eqref{itc1}, \eqref{itc3} we can apply Proposition \ref{lcl}, again with $\lambda=1/2$ and $\nu\in(0,1)$ to be chosen later. So there exist $\eps\in(0,1)$ depending on the data, $x_1\in B_{R_n/4}$ also depending on $u$, s.t.\ $B_{\eps R_n/4}(x_1)\subset B_{R_n/4}$ and
\beq\label{itc4}
\Big|B_{\eps R_n/4}(x_1)\cap\Big\{u\ge\mu_-^n+\frac{\omega_n}{8}\Big\}\Big| \ge (1-\nu)|B_{\eps R_n/4}(x_1)|.
\eeq
The next step consists in applying Proposition \ref{dgl} to $u$ with $\rho_0=\eps R_n/4$, $\mu=\mu_-^n$, and $k=\omega_n/8$. Note that there exists $\gamma\in(0,1)$ depending on the data s.t.\
\[\frac{\omega_n}{8} > \gamma{\rm Tail}\Big((u-\mu_-^n)_-,x_1,\frac{\eps R_n}{4}\Big).\]
Indeed, as in Lemma \ref{clu}, recalling that $u\ge\mu_-^n$ in $B_n$, we have
\begin{align*}
{\rm Tail}\Big((u-\mu_-^n)_-,x_1,\frac{\eps R_n}{4}\Big)^{p-1} &= \Big(\frac{\eps R_n}{4}\Big)^{ps}\int_{B_n^c}\frac{(u(x)-\mu_-^n)_-^{p-1}}{|x-x_1|^{N+ps}}\,dx \\
&\le C{\rm Tail}((u-\mu_-^n)_-,x_0,R_n)^{p-1} \le C\omega_n^{p-1},
\end{align*}
where in the last line we have used a decomposition as above and we may assume $C>1$. So let $\gamma=1/8C^{1/(p-1)}\in(0,1)$ to prove the desired inequality. Now let $\nu\in(0,1)$, depending on the data, be as in Proposition \ref{dgl}, and choose such $\nu$ in \eqref{itc4} to find
\[\Big|B_{\eps R_n/4}\cap\Big\{u\le\mu^n_-+\frac{\omega_n}{8}\Big\}\Big| \le \nu|B_{\eps R_n/4}|.\]
By Proposition \ref{dgl}, we have for a.e.\ $x\in B_{\eps R_n/8}(x_1)$
\[u(x) > \mu_-^n+\frac{\omega_n}{16},\]
which concludes the proof.
\end{proof}

\noindent
Next we recenter our ball at step $n$:

\begin{lemma}\label{itr}
{\rm (iterative positivity recentering)} Let \eqref{ito1}, \eqref{ito2}, \eqref{ito3} hold. Then, there exist $\sigma>1$, $\theta\in(0,1)$ depending on $N,p,s,\Omega,\Lambda_1,\Lambda_2$ s.t.\ for a.e.\ $x\in B_{n+1}$
\[u(x) > \mu_-^n+\frac{\omega_n}{32e^\sigma}.\]
\end{lemma}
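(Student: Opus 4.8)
The plan is to mimic the proof of Lemma \ref{rec}, but carrying along the inductive hypothesis \eqref{ito1} exactly as was done in Lemma \ref{itc}, so that all tail terms get controlled by $\omega_n$ rather than $\omega$. First I would fix $\sigma>\log(3)$ and $\theta\in(0,1/4)$ (to be pinned down later, all depending only on the data), set $r=R_{n+1}=\theta R_n/2$ — note $r<R_n/4$ — and recall the inequality $\log((e^\sigma+1)/2)\ge\sigma-1$. By Lemma \ref{itc} there are $\eps\in(0,1)$ and $x_1\in B_{R_n/4}$ with $B_{\eps r/2}(x_1)\subset B_{R_n/4}$ and $u>\mu_-^n+\omega_n/16$ a.e.\ on $B_{\eps r/2}(x_1)$. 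Define the sublevel set $A_{\sigma,r}=B_r\cap\{u\le\mu_-^n+\omega_n/(16e^\sigma)\}$. Then for a.e.\ $x\in B_{\eps r/2}(x_1)$ and $y\in A_{\sigma,r}$ the quotient $((u(x)-\mu_-^n)+\omega_n/(16e^\sigma))/((u(y)-\mu_-^n)+\omega_n/(16e^\sigma))$ is at least $(e^\sigma+1)/2$, so its logarithm is at least $\sigma-1$.

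Next I would integrate in $x$ over $B_{\eps r/2}(x_1)$ and in $y$ over $A_{\sigma,r}$, bound the integrand from below by $(\sigma-1)^p$, and then from above bound the double integral (now extended to $B_r\times B_r$ against the singular weight) using Proposition \ref{log} applied to the solution $u-\mu_-^n$ with $d=\omega_n/(16e^\sigma)$ and $\eta=\theta$; since $\theta<1/4$ the constant $C_\eta$ may be absorbed into a data constant. This gives
\[
(\sigma-1)^p|B_{\eps r/2}(x_1)|\,|A_{\sigma,r}| \le Cr^{2N}\Big[1+\Big(\frac{r}{R_n}\Big)^{ps}\Big(\frac{16e^\sigma}{\omega_n}\Big)^{p-1}{\rm Tail}((u-\mu_-^n)_-,x_0,R_n)^{p-1}\Big].
\]
The key point — and the place where the induction is really used — is to show that ${\rm Tail}((u-\mu_-^n)_-,x_0,R_n)^{p-1}\le C\omega_n^{p-1}$. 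This I would prove exactly by the dyadic decomposition of Lemma \ref{itc}: split $B_0^c=\bigcup(B_{i-1}\setminus B_i)\cup B_0^c$, estimate the outermost piece by ${\rm Tail}(u,x_0,R)^{p-1}+\omega^{p-1}\le C(\theta/2)^{-psn}(1-1/(32e^\sigma))^{-(p-1)n}\omega_n^{p-1}$ (wait — actually on $B_0^c$ we use $|u|\le\omega$ and $|\mu_-^n|\le\omega$), and on each annulus $B_{i-1}\setminus B_i$ use $(u(x)-\mu_-^n)_-\le\mu_+^{i-1}-\mu_-^{i-1}\le\omega_{i-1}$ from \eqref{ito1}, summing a geometric series whose ratio is $<1/2$ once $\theta$ is small. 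Hence ${\rm Tail}((u-\mu_-^n)_-,x_0,R_n)\le C\omega_n$, so with $\gamma=1/C$ we get $\omega_n>\gamma{\rm Tail}((u-\mu_-^n)_-,x_0,R_n)$.

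Dividing through by $(\sigma-1)^p$ and by $|B_r|,|B_{\eps r/2}(x_1)|\sim Cr^N$, and using the tail bound, I obtain $|A_{\sigma,r}|/|B_r|\le C(\sigma-1)^{-p}(1+e^{\sigma(p-1)}\theta^{ps})$. As in Lemma \ref{rec} I then impose the constraint $\theta<(\gamma/(16e^\sigma))^{(p-1)/(ps)}$ so that $1+e^{\sigma(p-1)}\theta^{ps}<2$, and verify that this same constraint guarantees $\omega_n/(16e^\sigma)>{\rm Tail}((u-\mu_-^n)_-,x_0,r)$ (using the scaling of the tail in the radius, $(R_n/r)^{ps/(p-1)}$ being a large factor), so that Proposition \ref{dgl} is applicable with $\rho_0=r$, $\mu=\mu_-^n$, $k=\omega_n/(16e^\sigma)$ and $\gamma=1$. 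Letting $\nu\in(0,1)$ be the threshold from Proposition \ref{dgl}, I finally fix $\sigma>\log(3)$ large enough (depending on the data, hence on $\nu$) that $C(\sigma-1)^{-p}<\nu/2$, and shrink $\theta$ further if needed to keep the displayed constraint; then $|A_{\sigma,r}|\le\nu|B_r|$, and Proposition \ref{dgl} yields $u>\mu_-^n+\omega_n/(32e^\sigma)$ a.e.\ on $B_{r/2}=B_{n+1}$, which is the claim. The main obstacle is bookkeeping: one must choose $\sigma$ and $\theta$ compatibly with the analogous choices forced in Lemma \ref{itc} and Proposition \ref{osc} (all the lemmas must end up using the \emph{same} $\sigma,\theta$), and one must check that the geometric-series ratio condition on $\theta$ in the tail decomposition, the constraint $\theta<(\gamma/(16e^\sigma))^{(p-1)/(ps)}$, and the smallness needed for Lemma \ref{itc} can all be satisfied simultaneously by a single $\theta$ depending only on the data.
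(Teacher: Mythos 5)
Your proposal follows the paper's proof step for step: apply Lemma \ref{itc} to obtain the cluster, use Proposition \ref{log} with $d=\omega_n/(16e^\sigma)$, bound ${\rm Tail}((u-\mu_-^n)_-,x_0,R_n)$ by $C\omega_n$ via the dyadic-annulus decomposition as in Lemma \ref{itc}, impose the constraint $\theta<(\gamma/(16e^\sigma))^{(p-1)/(ps)}$, verify the tail hypothesis of Proposition \ref{dgl} at the smaller radius, and conclude. The one substantive slip is the choice $r=R_{n+1}=\theta R_n/2$: applying Proposition \ref{dgl} with $\rho_0=r$ then gives the lower bound on $B_{r/2}=B_{\theta R_n/4}$, which is strictly contained in $B_{n+1}=B_{\theta R_n/2}$, so your closing assertion that $B_{r/2}=B_{n+1}$ is off by a factor of two; the paper sets $r=\theta R_n$ precisely so that $B_{r/2}=B_{n+1}$. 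Two small typos in your tail-decomposition sketch (the exponent should be $(\theta/2)^{psn}$, not $(\theta/2)^{-psn}$, and on $B_0^c$ the relevant control is ${\rm Tail}(u,x_0,R)\le\omega$ together with $|\mu_-^n|\le\omega$, not a pointwise bound $|u|\le\omega$) are essentially self-acknowledged and do not alter the argument.
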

\begin{proof}
Again, the argument partially follows that of Lemma \ref{rec}. Fix $\sigma>\log(3)$, $\theta\in(0,1/4)$ to be chosen later. As in Lemma \ref{rec}, applying Lemma \ref{itc} we find
\[\frac{\big|B_{\theta R_n}\cap\{u\le\mu_-^n+\omega_n/16e^\sigma\}\big|}{|B_{\theta R_n}|} \le \frac{C}{(\sigma-1)^p}\Big[1+\theta^{ps}\Big(\frac{16e^\sigma}{\omega_n}\Big)^{p-1}{\rm Tail}((u-\mu_-^n)_-,x_0, R_n)^{p-1}\Big].\]
As in Lemma \ref{itc} we find $\gamma\in(0,1)$ depending on the data s.t.\
\beq\label{itr1}
\omega_n \ge \gamma{\rm Tail}((u-\mu_-^n)_-,x_0,R_n),
\eeq
which along with the previous inequality gives
\beq\label{itr2}
\frac{\big|B_{\theta R_n}\cap\{u\le\mu_-^n+\omega_n/16e^\sigma\}\big|}{|B_{\theta R_n}|} \le \frac{C}{(\sigma-1)^p}\Big[1+\theta^{ps}\Big(\frac{16e^\sigma}{\gamma}\Big)^{p-1}\Big].
\eeq
Again we require that $\theta$ satisfy \eqref{rec2}, which we recall for simplicity:
\beq\label{itr3}
\theta < \Big(\frac{\gamma}{16e^\sigma}\Big)^\frac{p-1}{ps}.
\eeq
Using \eqref{itr1}, \eqref{itr3}, and recalling that $u\ge\mu_-^n$ in $B_n$, we have
\begin{align*}
\frac{\omega_n}{16e^\sigma} &\ge \frac{\gamma}{16e^\sigma}{\rm Tail}((u-\mu_-^n)_-,x_0,R_n) \\
&= \frac{\gamma}{16e^\sigma}R_n^\frac{ps}{p-1}\Big[\int_{B_n^c}\frac{(u(x)-\mu_-^n)_-^{p-1}}{|x-x_0|^{N+ps}}\,dx\Big]^\frac{1}{p-1} \\
&> (\theta R_n)^\frac{ps}{p-1}\Big[\int_{B_{\theta R_n}^c}\frac{(u(x)-\mu_-^n)_-^{p-1}}{|x-x_0|^{N+ps}}\,dx\Big]^\frac{1}{p-1} \\
&= {\rm Tail}((u-\mu_-^n)_-,x_0,\theta R_n).
\end{align*}
We can now apply Proposition \ref{dgl} to $u$, with $\rho_0=\theta R_n$, $\mu=\mu_-^n$, $k=\omega_n/16e^\sigma$. Let $\nu\in(0,1)$ be given by Proposition \ref{dgl}. We can find $\sigma>\log(3)$ depending on the data, s.t.\
\[\frac{C}{(\sigma-1)^p} < \frac{\nu}{2}.\]
Recalling that $\theta\in(0,1/4)$ satisfies \eqref{itr3}, we also get
\[1+\theta^{ps}\Big(\frac{16e^\sigma}{\gamma}\Big)^{p-1} < 2.\]
Therefore, \eqref{itr2} implies
\[\Big|B_{\theta R_n}\cap\Big\{u\le\mu^n_-+\frac{\omega_n}{16e^\sigma}\Big\}\Big| \le \nu|B_{\theta R_n}|.\]
Now Proposition \ref{dgl} implies that for a.e.\ $x\in B_{\theta R_n/2}=B_{n+1}$
\[u(x) > \mu_-^n+\frac{\omega_n}{32e^\sigma},\]
which concludes the proof.
\end{proof}

\noindent
We can now complete the proof of the iterative oscillation estimate:
\vskip4pt
\noindent
{\em Proof of Proposition \ref{ito}.} For $n=1$, the conclusion follows from Proposition \ref{osc}. So, we fix $n\in\N$, assume \eqref{ito1}, and prove that the conclusion holds at step $n+1$ (strong induction), distinguishing several cases. If \eqref{ito2} holds, then either \eqref{ito3} or \eqref{ito4} holds. Under \eqref{ito3}, Lemma \ref{itr} ensures the existence of $\sigma>1$, $\theta\in(0,1)$ depending on the data s.t.\ $u\ge\mu_-^n+\omega_n/32e^\sigma$ in $B_{n+1}$. So we have
\[\underset{B_{n+1}}{\rm osc}\,u \le \mu_+^n-\Big(\mu_-^n+\frac{\omega_n}{32e^\sigma}\Big) \le \omega_{n+1}.\]
If \eqref{ito3} fails, then \eqref{ito4} holds. Then we argue as in Lemmas \ref{itc}, \ref{itr} above, replacing $u$ with $-u$, and we find that in $B_{n+1}$
\[-u > -\mu_+^n+\frac{\omega_n}{32e^\sigma},\]
hence,
\begin{align*}
\underset{B_{n+1}}{\rm osc}\,u &=  \underset{B_{n+1}}{\rm osc}\,(-u) \\
&\le -\mu_-^n-\Big(-\mu_+^n+\frac{\omega_n}{32e^\sigma}\Big) \le \omega_{n+1}.
\end{align*}
Finally, if \eqref{ito2} fails, then trivially
\[\underset{B_{n+1}}{\rm osc}\,u \le \underset{B_n}{\rm osc}\,u < \frac{\omega_n}{2} < \omega_{n+1}.\]
In all case, by strong induction we have that for all $n\in\N$
\[\underset{B_n}{\rm osc}\,u \le \omega_n,\]
which concludes the proof. \qed

\section{Conclusion: proof of H\"older continuity}\label{sec6}

\noindent
In this final section we prove our main result. We use a classical method, as in \cite{DMV}, yet we prefer to include a detailed proof for the reader's convenience:
\vskip2pt
\noindent
{\em Proof of Theorem \ref{hol}.} Fix a ball $\overline{B}_R(x_0)\subset\Omega$. By Proposition \ref{ito}, there exist real numbers $0<a<b<1$ depending on the data, s.t.\ for all $n\in\N$
\[\underset{B_{a^nR}}{\rm osc}\, u \le b^n\omega,\]
where we omit the center $x_0$ and as in Proposition \ref{ito} we have set
\[\omega = 2\sup_{B_R}\,|u|+{\rm Tail}(u,x_0,R) > 0.\]
Since $\log(a)<\log(b)<0$, we may define
\[\alpha := \frac{\log(b)}{\log(a)} \in (0,1),\]
depending on the data, so that $b=a^\alpha$. We claim that for all $r\in(0,R)$
\beq\label{hol1}
\underset{B_r}{\rm osc}\,u \le \frac{\omega}{b}\Big(\frac{r}{R}\Big)^\alpha.
\eeq
Indeed, we can find $n\in\N$ s.t.\ $a^nR\le r<a^{n-1}R$. Then, by the definition of $\alpha$ we have
\[b^{n-1} \le \frac{b^{n-1}}{a^{\alpha n}}\Big(\frac{r}{R}\Big)^\alpha = \frac{1}{b}\Big(\frac{r}{R}\Big)^\alpha,\]
hence by the iterative estimate
\[\underset{B_r}{\rm osc}\,u \le \underset{B_{a^{n-1}R}}{\rm osc}\,u \le b^{n-1}\omega \le \frac{\omega}{b}\Big(\frac{r}{R}\Big)^\alpha.\]
The oscillation estimate \eqref{hol1} suffices to prove local (H\"older) continuity of $u$. To be more precise, let $\Omega'\Subset\Omega$ s.t.\ ${\rm dist}(\Omega',\Omega^c) = 2R > 0$. Define the $R$-dilatation of $\Omega'$ as the compact set
\[\Omega'_R = \big\{x\in\Omega:\,{\rm dist}(x,\Omega')\le R\big\}.\]
Since $u$ is locally bounded, we may set
\[\overline\omega = \sup_{x\in\Omega'_R}\,\big[2|u(x)|+{\rm Tail}(u,x,R)\big] \in (0,\infty).\]
We claim that there exists $C>0$ depending on the data, s.t.\ for all $x_1,x_2\in\Omega'$
\beq\label{hol2}
|u(x_1)-u(x_2)| \le C\overline\omega\Big(\frac{|x_1-x_2|}{R}\Big)^\alpha
\eeq
(recall that $u$ is continuous, hence the relation above holds for {\em all} $x_1$, $x_2$). We set $r=|x_1-x_2|>0$, and distinguish two cases:
\begin{itemize}[leftmargin=1cm]
\item[$(a)$] If $r<R/2$, then $x_1,x_2\in\overline{B}_r(x_1)$ and by \eqref{hol1} we have
\begin{align*}
|u(x_1)-u(x_2)| &\le \underset{B_r(x_1)}{\rm osc}\,u \\
&\le \frac{1}{b}\Big[2\sup_{B_R(x_1)}\,|u|+{\rm Tail}(u,x_1,R)\Big]\Big(\frac{r}{R}\Big)^\alpha \le C\overline\omega\Big(\frac{|x_1-x_2|}{R}\Big)^\alpha.
\end{align*}
\item[$(b)$] If $r\ge R/2$, then we can find $y_1,\ldots y_m\in\Omega$ ($m\ge 1$ depending on $\Omega'$) s.t.\ $x_1 \in B_{R/2}(y_1)$, $x_2 \in B_{R/2}(y_m)$, $\overline{B}_{R/2}(y_i) \subset \Omega$ for all $i\in\{1,\ldots m\}$, and $|y_i-y_{i+1}|<R/2$ for all $i\in\{i,\ldots m-1\}$. So, by \eqref{hol1} we have
\begin{align*}
|u(x_1)-u(x_2)| &\le |u(x_1)-u(y_1)|+\sum_{i=1}^{m-1}|u(y_i)-u(y_{i+1})|+|u(y_m)-u(x_2)| \\
&\le \underset{B_{R/2}(y_1)}{\rm osc}\,u+\sum_{i=1}^{m-1}\underset{B_{R/2}(y_i)}{\rm osc}\,u+\underset{B_{R/2}(y_m)}{\rm osc}\,u \\
&\le C\overline\omega\Big(\frac{1}{2}\Big)^\alpha \le C\overline\omega\Big(\frac{r}{R}\Big)^\alpha.
\end{align*}
\end{itemize}
In both cases we have \eqref{hol2}, which concludes the proof. \qed

\vskip4pt
\noindent
{\bf Acknowledgement.} The authors are members of GNAMPA (Gruppo Nazionale per l'Analisi Matematica, la Probabilit\`a e le loro Applicazioni) of INdAM (Istituto Nazionale di Alta Matematica 'Francesco Severi'). A.I.\ is partially supported by the research project {\em Problemi non locali di tipo stazionario ed evolutivo} (GNAMPA, CUP E53C23001670001). We would like to thank the anonymous Referees for their useful comments.

\end{document}